\documentclass[10pt]{amsart}

\title{Preserve One, Preserve All}

\author{Meera Mainkar and Benjamin Schmidt}
%\address{Department of Mathematics \\ 
                 %Michigan State University \\
                 %East Lansing, MI 48824}

\thanks{The second author learned about the Beckman-Quarles Theorem after the $d=2$ case was given to him as a puzzle during the Lie Group Actions in Riemannian Geometry held at Dartmouth College in 2017. He thanks Dmytro Yeroshkin for the excellent puzzle and Carolyn Gordon and Michael Jablonski for organizing the excellent conference.}

%\date{\today}

\newtheorem{thm}{Theorem}[section]
\newtheorem{lem}[thm]{Lemma}
\newtheorem{cor}[thm]{Corollary}
\newtheorem{prop}[thm]{Proposition}

\theoremstyle{definition}
\newtheorem{rem}{Remark}[section]
\newtheorem*{exmp*}{Examples}

\newtheorem*{defn*}{Definition}
\newtheorem*{thm*}{Theorem}	
\newtheorem*{conj*}{Conjecture}

\numberwithin{equation}{section}

% Abbreviations for the Greek letters
% "Borrowed" from an MSRI header file
%    Header file for MSRI Preprints, 5 April 1988

%\let\pl = \l
%\let\PL = \L

\def\Pb{\ifmmode{\Bbb P}\else{$\Bbb P$}\fi}
\def\Z{\ifmmode{\Bbb Z}\else{$\Bbb Z$}\fi}
\def\Q{\ifmmode{\Bbb Q}\else{$\Bbb Q$}\fi}
\def\C{\ifmmode{\Bbb C}\else{$\Bbb C$}\fi}
\def\R{\ifmmode{\Bbb R}\else{$\Bbb R$}\fi}
\def\H{\ifmmode{\Bbb H}\else{$\Bbb H$}\fi}
\def\S{\ifmmode{S^2}\else{$S^2$}\fi}

\def\inj{\operatorname{inj}}

\def\diam{\operatorname{Diam}}

\def\S{\mathcal S}

\def\conv{\operatorname{conv}}

\usepackage{graphicx} 

\usepackage{amsmath,amsthm,amsfonts,amscd,flafter,epsf}
\usepackage{graphics}
\usepackage{epsfig}
\usepackage{psfrag}

\begin{document}

\begin{abstract}  Isometries of metric spaces $(X,d)$ preserve \textit{all} level sets of $d$.  We formulate and prove cases of a conjecture asserting if $X$ is a complete Riemannian manifold, then a function $f:X \rightarrow X$ preserving at least \textit{one} level set $d^{-1}(r)$, with $r>0$ small enough, is an isometry. \end{abstract}
\maketitle

\setcounter{secnumdepth}{1}

\setcounter{section}{0}

\section{\bf Introduction}
Given metric spaces $(X,d_X)$ and $(Y,d_Y)$ and a function $f:X \rightarrow Y$, let $$P_f=\{r>0\, \vert\, d_X^{-1}(r)\neq \emptyset\,\,\,\,\, \text{and}\,\,\,\,\,\, d_X(x,y)=r \implies d_Y(f(x),f(y))=r\}$$ $$SP_f=\{r>0\,\vert\,d_X^{-1}(r)\neq \emptyset\,\,\,\,\, \text{and}\,\,\,\,\, d_X(x,y)=r \iff d_Y(f(x),f(y))=r\}.$$  The classical Beckman-Quarles theorem asserts if $X=Y$ are Euclidean  $d$-space  $\mathbb{E}^d$ and $d \geq 2$, then $P_f=\emptyset$ or $f$ is an isometry \cite{BeQu}\footnote{The $d=2$ case reappeared as Problem 6 in the 1997 Brazilian Mathematics Olympiad.}.   The dimensional hypothesis is necessary. 
\vskip 5pt

\noindent{\textbf{Example 1:}}   The bijection $f$ of $\mathbb{E}^1$ that fixes irrational numbers and adds one to rational numbers satisfies $\mathbb{Q}_{>0} \subset SP_f.$\vskip 5pt

The Beckman-Quarles theorem does not generalize to Riemannian manifolds without additional assumptions.\vskip 5pt

\noindent{\textbf{Example 2:}} Given a subset $A$ of the unit sphere $S^n \subset \mathbb{E}^{n+1}$ with $A=-A$, the bijection $f$ of $S^n$ that fixes the complement of $A$ and is multiplication by $-1$ on $A$ satisfies $\{\frac{1}{2}\pi,\pi\} \subset SP_f$. \vskip5pt

The convexity radius of $S^n$ equals $\frac{1}{2}\pi$, motivating the following conjectural generalization.
\vskip 5pt

 \noindent{\textbf{Conjecture:}}  \textit{If $X$ is a complete Riemannian manifold with positive convexity radius $\conv(X)$ and $\dim(X)\geq 2,$ then for each function $f:X \rightarrow X$,  either $(0,\conv(X))\cap P_f =\emptyset$ or $f$ is an isometry.}\vskip 5pt

The conjecture holds for real hyperbolic spaces \cite{Ku} and unit spheres \cite{Ev}.  If $f$ is a \textit{bijection} of a locally compact geodesically complete CAT(0) space $X$ with path connected metric spheres, then $SP_f=\emptyset$ or $f$ is an isometry \cite{Be, An}; complete and simply connected Riemannian manifolds with nonpositive sectional curvatures are examples of such spaces.  Theorems A-C below provide additional evidence for the validity of the conjecture. \vskip 5pt

%\noindent{\textbf{Theorem A:}} If $X$ is isometric to a quaternionic hyperbolic space or the Cayley hyperbolic plane and $f:X \rightarrow X$ is a surjective function, then $SP_f=\emptyset$ or $f$ is an isometry. \vskip 5pt

%The proof of Theorem A proceeds as follows.  If $P_f \neq \emptyset$, then Lemma \ref{?} establishes that $f$ is a quasi-isometry of $X$ and is therefore finite distance from an isometry $g$ of $X$ by \cite{Pan}.  The function $h=g^{-1}\circ f$ is surjective, preserves a distance, and is finite distance from $\Id_X$.  Lemma \ref{} shows the image of a complete geodesic under a function having these three properties is a subset of a complete geodesic. It follows that $h=\Id_X$, or equivalently, $f=g$.  ADD BRIEF DISCUSSION ABOUT OTHER QI RIGID SPACES...  \vskip 5pt

\noindent{\textbf{Theorem A:}} \textit{Let $X$ be as in the conjecture. If a function $f:X\rightarrow X$ is surjective or continuous, and if there exist $\{r,R\}\subset (0,\conv(X))\cap SP_f$ with $r/R$ irrational, then $f$ is an isometry.}\vskip 5pt

A metric space $(X,d)$ is \textit{two-point homogenous} if the isometry group acts transitively on each level $d^{-1}(r)$; the connected two-point homogenous spaces consist of the Euclidean and rank one symmetric spaces \cite{Wa, Sz}.  The noncompact connected two-point homogenous spaces have infinite convexity radii and the compact connected two-point homogenous spaces have convexity radii equal to half their diameter. \vskip 5pt

\noindent{\textbf{Theorem B}:} \textit{Let $X$ be a connected two-point homogenous space with $\dim(X) \geq 2$ and $f:X\rightarrow X$ be a surjective or continuous function. If $(0,\frac{2}{3}\conv(X)) \cap SP_f \neq \emptyset$, then $f$ is an isometry.}\vskip 5pt

The proof of Theorem B does not use the classification of connected two-point homogenous spaces.  Instead, a unified approach is presented using the authors' Diameter Theorem in \cite{MaSc}.\vskip 5pt 

\noindent{\textbf{Theorem C}:} \textit{Let $X$ be as in the conjecture and have a periodic geodesic flow of period $1$.  If $f:X \rightarrow X$ is a surjective or continuous function, then $(0,\conv(X))\cap SP_f \subset \mathbb{Q}$ or $f$ is an isometry.} \vskip 5pt

Up to rescaling the metric, the positively curved (rank one) locally symmetric spaces satisfy the hypotheses of Theorem C.  Smooth spheres in each dimension are known to admit metrics as in Theorem C in addition to the constant curvature metrics \cite{Bes}.  

The proofs of Theorems A-C construct sequences of preserved distances converging to zero and then apply the following generalization of the Myers-Steenrod Theorem \cite{MySt} to conclude $f$ is an isometry.\vskip 5pt

\noindent{\textbf{Immersion Theorem:}} \textit{Let $X$ and $Y$ be Riemannian manifolds with $X$ complete and $\dim(X)\geq 2$. If $f:X \rightarrow Y$ is a function and $0$ is a limit point of $P_f$, then $f$ is a Riemannian immersion.}\vskip 5pt

In the Immersion Theorem, the assumption that $0$ is a limit point of $P_f$ cannot be weakened to the assumption, as in the conjecture, that $(0,\conv(X))\cap P_f \neq \emptyset$.\vskip 5pt

\noindent{\textbf{Example 3:}} The chromatic number of the plane is at most seven since there exists a function $c:\mathbb{E}^2 \rightarrow \{1,2,3,4,5,6,7\}$ with the property that for each $x, y \in \mathbb{E}^2$, if $d(x,y)=1$, then  $c(x) \neq c(y)$ \cite[Attributed to Iswell]{Ha}. Given vertices $\{v_1,v_2,v_3,v_4,v_5,v_6,v_7\}$ of a regular simplex in $\mathbb{E}^6$ with unit side lengths, define $f:\mathbb{E}^2 \rightarrow \mathbb{E}^6$ by $f(x)=v_{c(x)}$.  Then $1 \in (0,\conv(\mathbb{E}^2))\cap P_f $, but $f$ is discontinuous.   \vskip 5pt

Functions that are surjective or continuous and that strongly preserve a small distance are bijective (Lemmas \ref{bijective} and \ref{continuous}).  If a bijection preserves a distance then it also preserves the set of metric spheres having radii equal to that distance.  Smaller preserved distances are constructed by intersecting such spheres. The convexity hypothesis ensures nonempty intersections.  

For $X$ as in the conjecture, $x \in X$ and $r>0$, let  $S^x_r=\{y\,\vert\, d(x,y)=r\}$.  Let $|Y|$ denote the cardinality of a set $Y$.\vskip 5pt

\noindent \textbf{Sphere Intersections Theorem:} \textit{Let $x_1,x_2 \in X$ and $r_1, r_2 \in (0,\conv(X))$.} \begin{enumerate}
\item  $S^{x_1}_{r_1} \cap S^{x_2}_{r_2}\neq \emptyset  \iff |r_1-r_2| \leq d(x_1,x_2) \leq r_1+r_2,$
\item $|S^{x_1}_{r_1}\cap S^{x_2}_{r_2}|=1 \iff d(x_1,x_2)=|r_1-r_2|>0\,\,\, \text{or}\,\,\, d(x_1,x_2)=r_1+r_2.$\end{enumerate}
\vskip 5pt

The convexity radius is reviewed in section \ref{Intersection}, where also, the various implications in the Sphere Intersections Theorem are proved as independent lemmas.  Each one is proved assuming weaker hypotheses on the radii except for the implication $$S^{x_1}_{r_1} \cap S^{x_2}_{r_2}\neq \emptyset \impliedby  |r_1-r_2| \leq d(x_1,x_2) \leq r_1+r_2.$$  The importance of convexity in this implication is illustrated by the following example.\vskip 5pt

\noindent{\textbf{Example 4:}}  Let $x_1,x_2 \in S^2$ be a pair of antipodal points.  Then $d(x_1,x_2)=\pi=2\conv(S^2)$.  Given $r_1 \in (\frac{1}{2}\pi,\pi)$ and $r_2 \in (\pi-r_1,r_1)$ the intersection $S^{x_1}_{r_1} \cap S^{x_2}_{r_2}$ is empty while the inequalities $|r_1-r_2|<d(x_1,x_2)<r_1+r_2$ are valid.       \vskip 5pt

The Immersion Theorem is proved in section \ref{Immersion}.  Preliminary results about the structure of preserved distances are proved in section \ref{preserve} and Theorems A-C are proved in section \ref{Main}.

\section{\bf Sphere Intersections Theorem}\label{Intersection} In this section, $X$ denotes a complete Riemannian manifold.  The Riemannian structure induces a complete geodesic metric $$d:X\times X \rightarrow \mathbb{R}.$$ Given $x \in X$ and $r>0$, let $$S^x_r=\{y\, \vert\, d(x,y)=r\},\,\,\,\,\, B^x_r=\{y\,\vert\, d(x,y)<r\},\,\,\,\,\,D^x_r=\{y\, \vert\, d(x,y)\leq r\}.$$

A subset $Y \subset X$ is \textit{strongly convex} if for each $y_1, y_2 \in Y$, there is a unique minimizing geodesic in $X$ with endpoints $y_1$ and $y_2$, and moreover, this geodesic lies entirely in $Y$.  Sufficiently small metric balls are strongly convex \cite{Wh}.  The \textit{convexity radius} of $X$, denoted $\conv(X)$, is the supremum of positive numbers $r$ having the property that for each $x \in X$ and $0<s<r$, the open ball $B^x_s$ is strongly convex, provided such a positive number exists, and is zero otherwise.  

The \textit{injectivity radius} of a point $x \in X$, denoted $\inj(x)$, is the supremum of positive real numbers $r$ such that all geodesic segments of length $r$ issuing from $x$ are minimizing.  The injectivity radius of a point in $X$ depends continuously on the point.  The injectivity radius of $X$, denoted $\inj(X)$, equals the infimum of the injectivity radii of its points.  

\begin{lem}\label{convexity}
The inequality $\conv(X) \leq \frac{1}{2}\inj(X)$ holds.
%\item For each $x \in X$, the open ball $B^{x}_{\conv(X)}$ is strongly convex.
%\end{enumerate}
\end{lem}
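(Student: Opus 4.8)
The plan is to prove directly that $\inj(X) \geq 2\conv(X)$. Write $c = \conv(X)$; we may assume $c>0$, since otherwise there is nothing to prove. It suffices to show that every unit-speed geodesic $\gamma\colon [0,\ell]\to X$ with $\ell < 2c$ is minimizing: granting this, for every $x\in X$ and every $r<2c$ all geodesic segments of length $r$ issuing from $x$ are minimizing, so $\inj(x)\geq r$; letting $r\uparrow 2c$ and then taking the infimum over $x$ gives $\inj(X)\geq 2c$.

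So suppose, for contradiction, that some such $\gamma$ is not minimizing, and let $t^\ast\in(0,\ell)$ be its cut time, so that $\gamma|_{[0,t^\ast]}$ is minimizing, $\gamma|_{[0,t]}$ is not minimizing for $t\in(t^\ast,\ell]$, and $\gamma(t^\ast)$ is the cut point of $\gamma(0)$ along $\gamma$. Since $t^\ast < \ell < 2c$, we have $t^\ast/2 < c$, so we may fix $s$ with $t^\ast/2 < s < c$; then the ball $B^{z}_{s}$ centered at the midpoint $z=\gamma(t^\ast/2)$ is strongly convex. Because $\gamma|_{[0,t^\ast]}$ is minimizing, $d(z,\gamma(t))=|t-t^\ast/2|\leq t^\ast/2 < s$ for $t\in[0,t^\ast]$, and this estimate extends to $t\in[t^\ast,t^\ast+\delta]$ for $\delta>0$ small; hence $\gamma([0,t^\ast+\delta])\subset B^{z}_{s}$, and in particular $\gamma(0)\in B^{z}_{s}$.

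Now distinguish two cases according to the cut point. If $\gamma(t^\ast)$ is not conjugate to $\gamma(0)$ along $\gamma$, there is a minimizing geodesic from $\gamma(0)$ to $\gamma(t^\ast)$ distinct from $\gamma|_{[0,t^\ast]}$; but $\gamma(0)$ and $\gamma(t^\ast)$ both lie in the strongly convex ball $B^{z}_{s}$, which admits a \emph{unique} minimizing geodesic between any two of its points, a contradiction. If instead $\gamma(t^\ast)$ is conjugate to $\gamma(0)$ along $\gamma$, then for each $t\in(t^\ast,t^\ast+\delta]$ the segment $\gamma|_{[0,t]}$ contains $\gamma(t^\ast)$ as an interior conjugate point and hence is not minimizing, so $d(\gamma(0),\gamma(t)) < t$, whereas $d(\gamma(0),\gamma(t)) = t$ for $t\in[0,t^\ast]$. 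Since $\gamma([0,t^\ast+\delta])$ lies in the strongly convex ball $B^{z}_{s}$, the function $t\mapsto d(\gamma(0),\gamma(t))$ is convex there; but a convex function equal to $t$ on $[0,t^\ast]$ satisfies $d(\gamma(0),\gamma(t))\geq t$ for $t\geq t^\ast$, contradicting the strict inequality just obtained.

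The main obstacle is the conjugate-point case, and specifically the input that the distance function $d(\gamma(0),\cdot)$ is convex along geodesics contained in a strongly convex ball of radius less than $\conv(X)$; this is the standard convexity property of distance functions below the convexity radius, which I would either cite or deduce from the Hessian estimate underlying Whitehead's theorem. The non-conjugate case is genuinely easier, requiring only uniqueness of minimizing geodesics in convex balls; alternatively one may run the convexity argument uniformly in both cases. A secondary point to handle with care is that completeness of $X$ is what guarantees the cut-point/conjugate-point dichotomy at $\gamma(t^\ast)$ once $t^\ast$ is known to be finite.
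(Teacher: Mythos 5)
Your strategy is entirely different from the paper's, which disposes of this lemma by citing Klingenberg's formula for the injectivity radius and Dibble's theorem on the convexity radius ([Kl], [Di], via [MaSc, Lemma 3.3]) rather than arguing from the definitions. Your reduction to showing that geodesics of length less than $2\conv(X)$ are minimizing, and your Case 1 (non-conjugate cut point), are correct and genuinely elementary: two distinct minimizing geodesics joining $\gamma(0)$ to $\gamma(t^\ast)$, both points lying in the strongly convex ball $B^{z}_{s}$, directly contradict the uniqueness built into the paper's definition of strong convexity.

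The gap is in Case 2, and it is not a cosmetic one. The convexity of $t\mapsto d(\gamma(0),\gamma(t))$ that you invoke does not follow from the paper's definition of $\conv(X)$: strong convexity of $B^{z}_{s}$ only asserts uniqueness and containment of minimizing geodesics, and carries no second-order information about distance functions. The ``standard convexity property below the convexity radius'' concerns $d(p,\cdot)$ restricted to a convex ball \emph{centered at} $p$ of radius less than $\conv(X)$; here $\gamma(0)$ is not the center of $B^{z}_{s}$, and the relevant values $d(\gamma(0),\gamma(t))\approx t^\ast$ may be as large as nearly $2\conv(X)$, far outside the range where that statement is available --- indeed, whether the spheres $S^{\gamma(0)}_{t}$ are even smooth, let alone convex hypersurfaces, at such $t$ is precisely what is being decided. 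Whitehead's Hessian estimate only yields convexity on balls of some sufficiently small, curvature-dependent radius, so it cannot supply the statement for all radii up to $\conv(X)$. Excluding a conjugate point at a parameter value in $[\conv(X),2\conv(X))$ is essentially the assertion that the convexity radius is bounded above by the focal radius, which is the substance of Dibble's theorem; at this step your argument therefore needs the very citation the paper relies on, and the claim that the non-conjugate case is ``the easy half'' has it backwards relative to where the real difficulty lies.
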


\begin{proof}
The inequality follows easily from \cite{Kl, Di}, see e.g \cite[Lemma 3.3]{MaSc}.  %If $a,b \in B^{x}_{\conv(X)}$, let $m=\max\{d(x,a),d(x,b)\}$. For sufficiently small $\epsilon>0$, the open metric ball $B^{x}_{m+\epsilon}$ is strongly convex and contained in $B^{x}_{\conv(X)}$ from which (2) follows.
\end{proof}
The next lemma is well known; the proof is omitted.
\begin{lem}\label{stricttriangle}
Let $a,b,x \in X$.  If equality holds in the triangle inequality $$d(a,b)\leq d(a,x)+d(x,b),$$ then there is an arclength parameterized minimizing geodesic $\tau:[0,d(a,b)] \rightarrow X$ with $a=\tau(0)$, $x=\tau(d(a,x))$, and $b=\tau(d(a,b))$.
\end{lem}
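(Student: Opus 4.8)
The plan is to build $\tau$ by concatenating minimizing geodesics on the two pieces and then to upgrade the concatenation to a genuine geodesic using the length equality.

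First I would invoke completeness: by Hopf--Rinow there exist arclength parameterized minimizing geodesics $\sigma_1 \colon [0,d(a,x)] \to X$ from $a$ to $x$ and $\sigma_2 \colon [0,d(x,b)] \to X$ from $x$ to $b$. Setting $t_0 = d(a,x)$ and $\ell = d(a,b)$, the hypothesis $d(a,b) = d(a,x) + d(x,b)$ says $\ell = t_0 + d(x,b)$, so the concatenation
$$
\tau(t) = \begin{cases} \sigma_1(t), & 0 \le t \le t_0,\\ \sigma_2(t - t_0), & t_0 \le t \le \ell,\end{cases}
$$
is a well-defined unit-speed piecewise smooth curve $\tau \colon [0,\ell] \to X$ with $\tau(0)=a$, $\tau(t_0)=x$, $\tau(\ell)=b$, and $\len(\tau) = d(a,x) + d(x,b) = d(a,b)$.

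Next I would observe that since $\len(\tau)$ realizes the distance $d(a,b)$, the curve $\tau$ is length-minimizing among all paths from $a$ to $b$, hence length-minimizing on every subinterval of $[0,\ell]$. A unit-speed length-minimizing curve is a smooth geodesic; the only place this needs checking for $\tau$ is the junction parameter $t_0$, where a priori there could be a corner. But if the one-sided velocities $\sigma_1'(t_0)$ and $\sigma_2'(0)$ were different, then for small $\e>0$ one could replace the portion of $\tau$ on $[t_0-\e,t_0+\e]$ by the unique minimizing geodesic joining $\tau(t_0-\e)$ to $\tau(t_0+\e)$, strictly decreasing the total length and contradicting minimality. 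Therefore $\tau$ has no corner, is a single smooth minimizing geodesic, and by construction satisfies $a=\tau(0)$, $x=\tau(d(a,x))$, $b=\tau(d(a,b))$, as claimed.

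The main (and essentially only) obstacle is the no-corner argument at $t_0$; everything else is an immediate consequence of Hopf--Rinow and the definition of a minimizing geodesic, which is why the statement is flagged as well known.
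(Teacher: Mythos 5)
The paper explicitly omits the proof of this lemma as well known, so there is no argument of the authors' to compare against; your proof is the standard one and is correct. The only step requiring care is the strictness of the shortcut at the junction $t_0$, and your justification is sound: for $\e$ small enough that $\tau(t_0-\e)$ and $\tau(t_0+\e)$ lie in a strongly convex ball about $x$, the unique minimizing geodesic between them is the only unit-speed curve realizing their distance, so a genuine corner (distinct one-sided velocities) would force $d\bigl(\tau(t_0-\e),\tau(t_0+\e)\bigr)<2\e$, contradicting the minimality of $\tau$ on that subinterval.
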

%\begin{proof}
%Consider the path obtained by concatenating a unit speed minimizing geodesic joining $a$ to $x$ with a unit speed geodesic joining $x$ to $b$.  This path has length $d(a,x)+d(x,b)$.  As equality holds, these geodesics meet smoothly at $x$ as is easily seen by considering a strongly convex ball with center $x$.
%\end{proof}

% Let $x_1,x_2 \in X$ and $r_1,r_2 \in (0, \infty)$.

\begin{lem}\label{Intersectimpliesinequalities}
If $S^{x_1}_{r_1} \cap S^{x_2}_{r_2} \neq \emptyset$, then $|r_1-r_2|\leq d(x_1,x_2) \leq r_1+r_2$.
\end{lem}

\begin{proof}
Let $z \in S^{x_1}_{r_1} \cap S^{x_2}_{r_2}$.  The desired inequalities are derived by substituting the equalities $d(x_1,z)=r_1$ and $d(x_2,z)=r_2$ into the three triangle inequalities associated to the set $\{d(x_1,x_2),d(x_1,z),d(x_2,z)\}$.
 \end{proof}
 
 \begin{lem}\label{Inequalitiesimpliesintersect}
 If $\dim(X)\geq 2$, $|r_1-r_2|\leq d(x_1,x_2) \leq r_1+r_2$, and either \begin{enumerate}
 \item $r_1,r_2\in(0,\conv(X))$, or
 \item $0<r_2\leq \min\{r_1,\inj(x_2)\}$ and $r_1+2r_2 \leq \inj(x_1)$,
 \end{enumerate} then $$S^{x_1}_{r_1} \cap S^{x_2}_{r_2} \neq \emptyset.$$
 \end{lem}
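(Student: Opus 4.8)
The plan is to reduce everything to an intermediate value argument. First I note that when $\dim(X)\ge 2$ and $r\le\inj(x)$, the metric sphere $S^x_r$ is connected, being the continuous image under $\exp_x$ of the Euclidean sphere of radius $r$ in $T_xX$; hence for a fixed point, say $x_1$, the restriction of $d(x_1,\cdot)$ to $S^{x_2}_{r_2}$ has connected image, and so $S^{x_1}_{r_1}\cap S^{x_2}_{r_2}\ne\emptyset$ as soon as one exhibits points $y_-,y_+\in S^{x_2}_{r_2}$ with $d(x_1,y_-)\le r_1\le d(x_1,y_+)$. Writing $d=d(x_1,x_2)$, the hypothesis $|r_1-r_2|\le d\le r_1+r_2$ instantly gives $|d-r_2|\le r_1\le d+r_2$, together with the companion inequalities used below when $r_2\le r_1$ (which is part of hypothesis (2), and which one may assume in case (1) by symmetry). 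If $d=0$ the claim is trivial: then $r_1=r_2$ and the two spheres coincide. So assume $d>0$.

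The candidate points $y_\pm$ are produced by walking along the unique minimizing geodesic through $x_1$ and $x_2$: continuing a further $r_2$ past $x_2$ yields a point $y_+$ with $d(x_2,y_+)=r_2$ and $d(x_1,y_+)=d+r_2\ge r_1$, while travelling to the point at distance $|d-r_2|$ from $x_1$ (passing beyond $x_1$ if $d<r_2$) yields $y_-\in S^{x_2}_{r_2}$ with $d(x_1,y_-)=|d-r_2|\le r_1$ — provided in each instance the geodesic segment in play is minimizing. In case (2) the inequalities $d+r_2\le r_1+2r_2\le\inj(x_1)$ and $r_2\le\inj(x_2)$ guarantee exactly this, and the argument closes. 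In case (1), Lemma \ref{convexity} yields $\inj(x)\ge\inj(X)\ge 2\conv(X)$ for every $x$, hence $\inj(x)>d$ and $\inj(x)>2r_1$; consequently, when $d\le r_1$, the symmetric construction carried out on $S^{x_1}_{r_1}$ — producing $w_\pm\in S^{x_1}_{r_1}$ with $d(x_2,w_\pm)$ equal to $d+r_1$ and $|d-r_1|$, all the relevant segments being minimizing since $d+r_1\le 2r_1<\inj(x_2)$ — disposes of case (1) whenever $d\le r_1$.

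The one remaining, and most delicate, situation is case (1) with $d>r_1\ge r_2$: here walking a full radius past a center may leave the injectivity ball, so the extreme values of $d(x_1,\cdot)$ on $S^{x_2}_{r_2}$ are no longer controlled by $d\pm r_2$, and a genuinely different idea is needed. I would argue by contradiction, using strong convexity of small balls. The point of $S^{x_2}_{r_2}$ lying on the minimizing $x_1x_2$-segment is at distance $d-r_2\le r_1$ from $x_1$; if this equals $r_1$ we are already done, so assume $d-r_2<r_1$. Suppose $S^{x_1}_{r_1}\cap S^{x_2}_{r_2}=\emptyset$. Then $S^{x_2}_{r_2}$ is a connected set disjoint from $S^{x_1}_{r_1}$ which meets the open ball $B^{x_1}_{r_1}$ (at the point just described), so $S^{x_2}_{r_2}\subset B^{x_1}_{r_1}$. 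Now pick any unit vector $v\in T_{x_2}X$: the points $\exp_{x_2}(r_2v)$ and $\exp_{x_2}(-r_2v)$ both lie on $S^{x_2}_{r_2}$, hence in $B^{x_1}_{r_1}$, and the geodesic $t\mapsto\exp_{x_2}(tv)$, $t\in[-r_2,r_2]$, is a minimizing geodesic joining them, its length $2r_2$ being below the injectivity radius at the endpoints. Since $r_1<\conv(X)$, the ball $B^{x_1}_{r_1}$ is strongly convex, which forces this (unique) minimizing geodesic, and in particular its midpoint $x_2$, into $B^{x_1}_{r_1}$; thus $d(x_1,x_2)<r_1$, contradicting $d>r_1$. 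Hence $S^{x_1}_{r_1}\cap S^{x_2}_{r_2}\ne\emptyset$. The main obstacle is precisely this last step: in case (2) one never needs convexity because the injectivity hypothesis is strong enough for the explicit construction, whereas in case (1) the estimate $\inj\ge 2\conv$ is too weak when $d$ is large, and strong convexity must be invoked instead. The rest is routine verification that each geodesic segment used throughout is minimizing.
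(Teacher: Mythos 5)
Your proposal is correct and follows essentially the same route as the paper: reduce to an intermediate value argument on the connected metric sphere $S^{x_2}_{r_2}$ (or $S^{x_1}_{r_1}$), using the two points at parameters $d(x_1,x_2)\pm r_2$ along the geodesic through the centers, with strong convexity of $B^{x_1}_{r_1}$ rescuing the one subcase of hypothesis (1) where $d(x_1,x_2)+r_2$ may exceed $\inj(x_1)$. The only real divergence is in how that convexity step is closed: the paper traps the single chord $\gamma\vert_{[T_-,T_+]}$ inside $B^{x_1}_{r_1}$ and contradicts this at the point $\gamma(r_1+\epsilon)$, whereas you trap all of $S^{x_2}_{r_2}$ inside $B^{x_1}_{r_1}$ and apply strong convexity to an arbitrary diameter of that sphere to force $x_2\in B^{x_1}_{r_1}$, contradicting $d(x_1,x_2)>r_1$ directly; your version avoids the $\epsilon$-bookkeeping, at the cost of an extra case split ($d\le r_1$ versus $d>r_1$) that the paper's unified argument does not need.
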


\begin{proof}
By Lemma \ref{convexity}, the hypotheses imply $r_i\leq \inj(x_i)$ for $i=1,2$.  In particular, the spheres $S^{x_i}_{r_i}\neq \emptyset$ for $i=1,2$. If $d(x_1,x_2)=0$, then $r_1=r_2$, and $S^{x_1}_{r_1}=S^{x_2}_{r_2},$ concluding the proof in this case.  Now assume $d(x_1,x_2)>0$.  Without loss of generality, $r_2 \leq r_1$.  Set $$T_{-}=d(x_1,x_2)-r_2\,\,\,\,\, \text{and} \,\,\,\,\,\, T_{+}=d(x_1,x_2)+r_2.$$ The hypotheses imply the inequalities \begin{equation}\label{est1} |T_{-}|\leq r_1\end{equation}  and \begin{equation}\label{est2} r_1 \leq T_{+}\leq r_1+2r_2.\end{equation}  Let $\gamma:\mathbb{R} \rightarrow X$ be an arclength parameterized geodesic with $x_1=\gamma(0)$ and $x_2=\gamma(d(x_1,x_2))$.  Set $$a=\gamma(T_{-})\,\,\,\,\,\text{and}\,\,\,\,\,b=\gamma(T_{+}).$$  

As $r_2\leq \inj(x_2)$, the restrictions of the geodesic $\gamma$ to the length $r_2$ intervals $[T_{-},d(x_1,x_2)]$ and $[d(x_1,x_2), T_{+}]$ are minimizing.  Therefore \begin{equation}\label{sphere1} a,b \in S^{x_2}_{r_2}.\end{equation}    If $d(x_1,a)=r_1$ or $d(x_1,b)=r_1$, then $S^{x_1}_{r_1} \cap S^{x_2}_{r_2} \neq \emptyset,$ concluding the proof in these cases.  Now assume \begin{equation}\label{noteq} d(x_1,a)\neq r_1\,\,\,\,\,\, \text{and}\,\,\,\,\, d(x_1,b)\neq r_1.\end{equation}  

We now claim \begin{equation}\label{master}T_{-}\leq|T_{-}|=d(x_1,a)<r_1<T_{+}. \end{equation}  To verify this claim, note that by (\ref{est1}), $|T_{-}|\leq r_1\leq \inj(x_1)$, implying $$d(x_1, a)=d(\gamma(0),\gamma(T_{-}))=|T_{-}|\leq r_1,$$ and by (\ref{noteq}), $T_{-}\leq |T_{-}|=d(x_1,a)<r_1.$  Similarly, if the inequality $r_1\leq T_{+} $ in (\ref{est2}) is an equality, then $$d(x_1,b)=d(\gamma(0),\gamma(T_{+}))=d(\gamma(0),\gamma(r_1))=r_1,$$ contradicting (\ref{noteq}), and concluding the verification of (\ref{master}). 

% If $d(x_1,a)=r_1$ or $d(x_1,b)=r_1$, then $S^{x_1}_{r_1} \cap S^{x_2}_{r_2} \neq \emptyset,$ concluding the proof in these cases.  Now assume \begin{equation}\label{noteq} d(x_1,a)\neq r_1\,\,\,\,\,\, \text{and}\,\,\,\,\, d(x_1,b)\neq r_1.\end{equation}

We next claim \begin{equation}\label{master2} d(x_1,b)>r_1.\end{equation}  

To verify (\ref{master2}), first consider the case when hypothesis (2) holds.  In this case, $T_{+}=d(x_1,x_2)+r_2\leq r_1+2r_2\leq \inj(x_1),$ whence $$d(x_1,b)=d(\gamma(0),\gamma(T_+))=T_{+}=d(x_1,x_2)+r_2\geq (r_1-r_2)+r_2=r_1.$$  By (\ref{noteq}), the inequality is strict, concluding the verification of (\ref{master2}) in this case.  

To complete the verification of (\ref{master2}), now consider the case when hypothesis (1) holds.  If (\ref{master2}) fails, then $d(x_1,b)\leq r$, and by (\ref{noteq}), $d(x_1,b)<r_1$.  This inequality and (\ref{master}) imply that $a,b \in B^{x_1}_{r_1}$, a strongly convex ball since $r_1<\conv(X)$.  As $r_2<\conv(X)$, Lemma \ref{convexity} implies that the restriction of $\gamma$ to the length $2r_2$ interval $[T_{-},T_{+}]$ is a minimizing geodesic joining $a$ to $b$.  As $B^{x_1}_{r_1}$ is strongly convex, this minimizing geodesic is contained in $B^{x_1}_{r_1}$, or equivalently, \begin{equation}\label{contain} t \in [T_{-},T_{+}] \implies d(x_1,\gamma(t))<r_1.\end{equation} On the other hand, by (\ref{master})  there exists $\epsilon>0$ with   $$\epsilon< \min\{T_{+}-r_1,\conv(X)-r_1, \frac{1}{2}\inj(X)\}.$$    As $$r_1+\epsilon< \conv(X)+\epsilon \leq \frac{1}{2}\inj(X)+\epsilon <\inj(X),$$ the restriction of $\gamma$ to $[0,r_1+\epsilon]$ is a minimizing geodesic.  Therefore, $d(x_1,\gamma(r_1+\epsilon))=d(\gamma(0),\gamma(r_1+\epsilon))=r_1+\epsilon$, contrary to (\ref{contain}), concluding the verification of (\ref{master2}).

The inequalities (\ref{master}) and (\ref{master2}) imply that $S^{x_1}_{r_1} \cap S^{x_2}_{r_2} \neq \emptyset$ as will now be demonstrated.  As $\dim(X) \geq 2$ and $r_2 < \inj(x_2)$, the metric sphere $S^{x_2}_{r_2}$ is path connected. Let $\phi:[0,1] \rightarrow S^{x_2}_{r_2}$ be a continuous path with $\phi(0)=a$ and $\phi(1)=b$ and define $f:[0,1] \rightarrow \mathbb{R}$ by $f(t)=d(x_1,\phi(t))$.  Then $f(0)< r_1$ and $f(1)> r_1$ by (\ref{master}) and (\ref{master2}).  By the intermediate value theorem, there exists $t_0 \in (0,1)$ with $f(t_0)=r_1$.  It follows $$\phi(t_0) \in S^{x_1}_{r_1} \cap S^{x_2}_{r_2},$$ concluding the proof. 

%  By Lemma \ref{convexity} (SHOULD USE THAT $D^{x_2}_{r_2}$ Is Strongly Convex), the restriction of $\gamma$ to the interval $[T_{-},T_{+}]$ is minimizing and contains $p_1$ in its interior since $$T_{-}\leq r_1<r_1+\epsilon<T_{+}.$$  

%By Lemma \ref{convexity}, $D^{x_1}_{r_1}$ is strongly convex.  

  %It follows from (\ref{a}) and (\ref{c}) that $b=\gamma(T_{+}) \notin D^{x_1}_{r_1}$ or equivalently $d(x_1,b)>r_1$, establishing (\ref{b}) and concluding the proof of the Lemma.

%and set $p:=\gamma(r_1-\epsilon).$ As $$T_{+}-(r_1-\epsilon)=d(x,y)+r_2-r_1+\epsilon \leq (r_1+r_2)+r_2-r_1+\epsilon=2r_2+\epsilon<\inj(X),$$ the restriction of $\gamma$ to $[r_1-\epsilon,T_{+}]$ is a minimizing geodesic joining $p$ to $b$.

%As $p \in B^{x_1}_{r_1}$ and $\gamma(r_1+\epsilon) \notin B^{x_1}_{r_1}$, since $B^{x_1}_{r_1}$ is strongly convex, $b \notin B^{x_1}_{r_1}$, or equaivalently, \begin{equation} \label{far} d(x_1,b)>r_1. \end{equation}

\end{proof}

%\begin{rem}
%Lemmas \ref{Intersectimpliesinequalities}-\ref{Inequalitiesimpliesintersect} together imply statement (1) in the Intersecting Spheres Theorem.
%\end{rem}

%\begin{rem}\label{alternative}
%In Lemma \ref{Inequalitiesimpliesintersect}, the hypothesis $r_1,r_2 \in (0, \conv(X))$ may be replaced by the alternative hypotheses that $0<r_2\leq \min\{r_1,\inj(x_2)\}$ and $r_1+2r_2 \leq \inj(x_1)$.  Indeed, the proof presented above is easily modified under this alternative hypothesis as follows.  No modification is needed up to the derivation of (\ref{a}).  Then the alternative hypothesis implies $T_{+}=d(x_1,x_2)+r_2\leq r_1+2r_2\leq \inj(x_1)$ whence $$d(x_1,b)=d(\gamma(0),\gamma(T_+))=d(x_1,x_2)+r_2\geq (r_1-r_2)+r_2=r_1.$$ If equality holds, then $b$ lies in the intersection of the spheres. If the inequality is strict, then the last paragraph in the proof presented above applies without modification.
%\end{rem}

\begin{lem}\label{Pointimpliesequality}
If $\dim(X)\geq 2$,  $r_2< \inj(x_2)$, $r_1+r_2< \inj(x_1)$, $r_2\leq r_1$, and $|S^{x_1}_{r_1}\cap S^{x_2}_{r_2}|=1$, then $d(x_1,x_2)=r_1-r_2>0$ or $d(x_1,x_2)=r_1+r_2.$
\end{lem}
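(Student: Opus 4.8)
I would argue by contradiction, exploiting the fact that a one-point intersection is extremely rigid. Set $\{z\}=S^{x_1}_{r_1}\cap S^{x_2}_{r_2}$. Since $r_1<\inj(x_1)$ and $r_2<\inj(x_2)$ while $z$ differs from both centers, there are unique unit-speed minimizing geodesics from $z$ to $x_1$ and from $z$ to $x_2$, with initial velocities $v_1,v_2\in T_zX$. The entire proof reduces to showing that the angle $\theta=\angle(v_1,v_2)$ equals $0$ or $\pi$: in the first case truncating the geodesic from $z$ to $x_1$ yields $d(x_1,x_2)=r_1-r_2$, and in the second splicing the two geodesics at $z$ yields $d(x_1,x_2)=r_1+r_2$.

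Before getting to $\theta$, I would dispatch the case $x_1=x_2$: then $S^{x_1}_{r_1}\cap S^{x_2}_{r_2}$ is empty if $r_1\neq r_2$ and equals the full sphere $S^{x_1}_{r_1}$ if $r_1=r_2$, and the latter has more than one point because $\dim(X)\geq 2$ and $r_1<\inj(x_1)$; hence $x_1\neq x_2$. I would then record the local structure at $z$: the distance functions $\rho_i=d(x_i,\cdot)$ are smooth near $z$ with $\nabla_z\rho_i=-v_i$; the set $S^{x_2}_{r_2}$, being a regular level set of $\rho_2$ near $z$, is a smooth hypersurface there with $T_zS^{x_2}_{r_2}=v_2^\perp$; and globally $S^{x_2}_{r_2}=\exp_{x_2}(\{\,|v|=r_2\,\})$ is diffeomorphic to $S^{\dim(X)-1}$, so $S^{x_2}_{r_2}\setminus\{z\}$ is path-connected.

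The main step is ruling out $\theta\in(0,\pi)$. There the orthogonal projection $N=-v_1+(\cos\theta)v_2$ of $\nabla_z\rho_1$ onto $v_2^\perp=T_zS^{x_2}_{r_2}$ is nonzero and satisfies $\langle -v_1,N\rangle=1-\cos^2\theta>0$. Taking a smooth curve $c$ in $S^{x_2}_{r_2}$ with $c(0)=z$ and $\dot c(0)=N$, the first variation formula shows that $\rho_1(c(t))$ has positive derivative at $t=0$, so for small $t_0>0$ the points $c(\pm t_0)$ lie on $S^{x_2}_{r_2}\setminus\{z\}$ with $\rho_1(c(t_0))>r_1>\rho_1(c(-t_0))$; joining $c(t_0)$ to $c(-t_0)$ by a path in the path-connected set $S^{x_2}_{r_2}\setminus\{z\}$ and applying the intermediate value theorem to $\rho_1$ along it produces a point of $S^{x_1}_{r_1}\cap S^{x_2}_{r_2}$ other than $z$, a contradiction. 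The remaining cases are quick: if $\theta=0$ then $v_1=v_2$, so (using $r_2\leq r_1$) $x_2$ lies on the minimizing geodesic from $z$ to $x_1$ and $d(x_1,x_2)=r_1-r_2$, which is positive because $x_1\neq x_2$; if $\theta=\pi$ then $v_1=-v_2$, so the two geodesics from $z$ join into a single geodesic from $x_1$ through $z$ to $x_2$ of length $r_1+r_2<\inj(x_1)$, hence minimizing, giving $d(x_1,x_2)=r_1+r_2$.

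The delicate point is the $\theta\in(0,\pi)$ case in low dimension: when $\dim(X)=2$ the sphere $S^{x_2}_{r_2}$ is a circle and deleting $z$ disconnects it locally, but $c(t_0)$ and $c(-t_0)$ can still be joined by going the long way around, so only path-connectedness of the punctured sphere is needed, and that holds for every $\dim(X)\geq 2$. One should also keep in mind that $\rho_1$ need not be smooth on all of $S^{x_2}_{r_2}$ (points far from $x_1$ may lie beyond $\inj(x_1)$), but the derivative is computed only at $z$ and the rest of the argument uses only continuity of $\rho_1$, so this causes no difficulty.
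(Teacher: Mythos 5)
Your proof is correct, and its overall skeleton matches the paper's: dispose of $x_1=x_2$ using $\dim(X)\geq 2$, reduce everything to the relative position of the two radial directions at the unique intersection point $z$, and then read off $d(x_1,x_2)=r_1-r_2$ or $r_1+r_2$ from the two aligned cases exactly as the paper does (truncation of a minimizing geodesic in the first case, splicing plus $r_1+r_2<\inj(x_1)$ in the second). Where you genuinely diverge is the middle step. The paper argues: since $z$ is the unique intersection point and $\dim(X)\geq 2$, the two codimension-one spheres cannot meet transversally at $z$, hence $T_zS^{x_1}_{r_1}=T_zS^{x_2}_{r_2}$, and Gauss's Lemma gives $v_1=\pm v_2$. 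You instead assume $\theta\in(0,\pi)$, push along the nonzero tangential component of $\nabla_z\rho_1$ to find points of $S^{x_2}_{r_2}$ on either side of the level $r_1$, and use path-connectedness of the punctured sphere plus the intermediate value theorem to manufacture a second intersection point. Your route costs a few more lines but buys something real: when $\dim(X)=2$, a transversal crossing of two curves on a surface is locally a single point, so the paper's non-transversality assertion needs exactly the kind of global connectivity argument you supply, whereas for $\dim(X)\geq 3$ transversality alone would already force a positive-dimensional intersection. Your remarks on the two potential pitfalls (connectivity of the punctured circle when $\dim(X)=2$, and non-smoothness of $\rho_1$ far from $z$) are both on target and correctly resolved.
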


\begin{proof}
As $r_i< \inj(x_i)$, the metric spheres $S^{x_i}_{r_i}$ are embedded codimension one submanifolds of $X$.  

If $x_1 = x_2$ and $r_2<r_1$, then $S^{x_1}_{r_1}$ and $S^{x_2}_{r_2}$ have empty intersection.  If $x_1=x_2$ and $r_1=r_2$, then $S^{x_1}_{r_1} \cap S^{x_2}_{r_2}=S^{x_1}_{r_1}$ has dimension $\dim(X)-1>0$. Therefore $d(x_1,x_2)>0$.

Let $z$ be the unique point in $S^{x_1}_{r_1}\cap S^{x_2}_{r_2}$.  As $z$ is the unique point and $\dim(X) \geq 2$, the codimension one submanifolds $S^{x_1}_{r_1}$ and $S^{x_2}_{r_2}$ do \textit{not} intersect transversally at $z$.  Therefore $T_z S^{x_1}_{r_1}=T_zS^{x_2}_{r_2}$ as subspaces of $T_zX.$

For $i=1,2$, let $\gamma_i:[0,r_i]\rightarrow X$ be an arclength parameterized minimizing geodesic joining $x_i=\gamma_i(0)$ to $z=\gamma_i(r_i)$.  By Gauss' Lemma, $\dot{\gamma}_i(r_i)$ is perpendicular to the subspace $T_z S^{x_i}_{r_i}$ of $T_z X$.  Conclude $\dot{\gamma}_1(r_1)=\pm \dot{\gamma}_2(r_2)$.

If $\dot{\gamma}_1(r_1)=\dot{\gamma}_2(r_2):=v$ let $\gamma:\mathbb{R} \rightarrow X$ denote the complete geodesic in $X$ with $\dot{\gamma}(0)=-v$. Then $\gamma(r_2)=x_2$ and $\gamma(r_1)=x_1$. As $r_1<\inj(x_1)$ the geodesic $\tau:[0,r_1]\rightarrow X$ defined by $\tau(s)=\gamma(r_1-s)$ is unit speed and minimizing.  Therefore $$d(x_1,x_2)=d(\gamma(r_1),\gamma(r_2))=d(\tau(0),\tau(r_1-r_2))=r_1-r_2.$$  

If $\dot{\gamma}_1(r_1)=-\dot{\gamma}_2(r_2):=v$, then let $\gamma: \mathbb{R} \rightarrow X$ denote the complete geodesic with $\dot{\gamma}(0)=\dot{\gamma}_1(0)$.  Then $x_1=\gamma(0)$ and $x_2=\gamma(r_1+r_2)$, and since $r_1+r_2< \inj(x_1)$, $d(x_1,x_2)=d(\gamma(0),\gamma(r_1+r_2))=r_1+r_2.$
\end{proof}

\begin{lem}\label{Equalityimpliespoint1}
If $0<r_2<r_1<\inj(x_1)$ and $d(x_1,x_2)=r_1-r_2$, then $|S^{x_1}_{r_1}\cap S^{x_2}_{r_2}|=1$.
\end{lem}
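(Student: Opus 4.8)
The plan is to prove the two assertions $S^{x_1}_{r_1}\cap S^{x_2}_{r_2}\neq\emptyset$ and $|S^{x_1}_{r_1}\cap S^{x_2}_{r_2}|\leq 1$ separately, using only the inequality $d(x_1,x_2)=r_1-r_2<r_1<\inj(x_1)$ as input (so, in particular, without appealing to Lemma \ref{Inequalitiesimpliesintersect} or Lemma \ref{Pointimpliesequality}, whose hypotheses are not all available here). For existence, note that since $d(x_1,x_2)=r_1-r_2<\inj(x_1)$ there is a minimizing geodesic from $x_1$ to $x_2$; extend it to an arclength parameterized geodesic $\sigma:[0,r_1]\rightarrow X$ with $\sigma(0)=x_1$ and $\sigma(r_1-r_2)=x_2$. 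Because $r_1<\inj(x_1)$, the whole segment $\sigma|_{[0,r_1]}$ is minimizing, so $z:=\sigma(r_1)$ satisfies $d(x_1,z)=r_1$. The triangle inequality then gives $r_2=d(x_1,z)-d(x_1,x_2)\leq d(x_2,z)\leq\len(\sigma|_{[r_1-r_2,r_1]})=r_2$, hence $d(x_2,z)=r_2$ and $z\in S^{x_1}_{r_1}\cap S^{x_2}_{r_2}$.

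For uniqueness, suppose $z'\in S^{x_1}_{r_1}\cap S^{x_2}_{r_2}$. Then $d(x_1,z')=r_1=(r_1-r_2)+r_2=d(x_1,x_2)+d(x_2,z')$, so equality holds in the triangle inequality and Lemma \ref{stricttriangle} produces an arclength parameterized minimizing geodesic $\tau:[0,r_1]\rightarrow X$ with $\tau(0)=x_1$, $\tau(r_1-r_2)=x_2$, and $\tau(r_1)=z'$. The restriction $\tau|_{[0,r_1-r_2]}$ is a minimizing geodesic from $x_1$ to $x_2$, and since $d(x_1,x_2)=r_1-r_2<\inj(x_1)$ such a minimizing geodesic is unique; therefore $\tau|_{[0,r_1-r_2]}=\sigma|_{[0,r_1-r_2]}$ and in particular $\dot\tau(0)=\dot\sigma(0)$. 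A geodesic is determined by its initial point and initial velocity, so $\tau=\sigma$ on $[0,r_1]$ and $z'=\tau(r_1)=\sigma(r_1)=z$. This shows the intersection is exactly the single point $z$.

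The argument is essentially routine, so there is no serious obstacle; the only point requiring care is to check that the hypothesis $0<r_2<r_1<\inj(x_1)$ is exactly what is needed for the two uniqueness facts invoked — namely that the minimizing geodesic from $x_1$ to $x_2$ is unique (this uses $d(x_1,x_2)=r_1-r_2<\inj(x_1)$) and that $\sigma|_{[0,r_1]}$ remains minimizing (this uses $r_1<\inj(x_1)$). No convexity assumption, no dimension assumption, and no hypothesis on $x_2$ or on $r_2$ beyond $0<r_2<r_1$ is required, which is consistent with the statement as given.
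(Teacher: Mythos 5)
Your proof is correct and follows essentially the same route as the paper's: both produce the intersection point by extending the unique minimizing geodesic from $x_1$ to $x_2$ out to parameter $r_1$, and both establish uniqueness by observing that any $q$ in the intersection forces equality in the triangle inequality, invoking Lemma \ref{stricttriangle}, and using uniqueness of the minimizing segment from $x_1$ to $x_2$ inside the injectivity radius. The only cosmetic difference is your extra triangle-inequality step to confirm $d(x_2,z)=r_2$, which the paper gets directly from minimality of the subsegment.
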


\begin{proof}
As $r_1-r_2<r_1<\inj(x_1)$ there is a \textit{unique} arclength parameterized minimizing geodesic $\bar{\gamma}:[0,r_1-r_2]\rightarrow X$ joining $x_1=\bar{\gamma}(0)$ to $x_2=\bar{\gamma}(r_1-r_2)$.  Let $\gamma:\mathbb{R} \rightarrow X$ denote its complete extension.  Then $x_1=\gamma(0)$ and $x_2=\gamma(r_1-r_2)$.  Set $p=\gamma(r_1)$.  As $r_1<\inj(x_1)$, the restriction of $\gamma$ to $[0,r_1]$ is minimizing.  Therefore
$$d(x_1,p)=d(\gamma(0),\gamma(r_1))=r_1\,\,\,\,\, \text{and}\,\,\,\,\, d(x_2,p)=d(\gamma(r_1-r_2),\gamma(r_1))=r_2$$ and  
$p \in S^{x_1}_{r_1} \cap S^{x_2}_{r_2}.$  If $q \in S^{x_1}_{r_1} \cap S^{x_2}_{r_2}$, then $$r_1=d(x_1,q)\leq d(x_1,x_2)+d(x_2,q) =(r_1-r_2)+r_2=r_1.$$ By Lemma \ref{stricttriangle}, there is an arclength parameterized minimizing geodesic $\tau: [0,r_1] \rightarrow X$ joining $x_1=\tau(0)$ to $q=\tau(r_1)$ with $x_2=\tau(r_1-r_2)$.  By uniqueness, the restriction of $\tau$ to $[0,r_1-r_2]$ equals $\bar{\gamma}$, and consequently, the restriction of $\gamma$ to $[0,r_1]$ equals $\tau$.  Therefore $q=\tau(r_1)=\gamma(r_1)=p$.     
\end{proof}

%Given $x \in X$ and $r>0$, let $D^x_r=\{y\,\vert\, d(x,y)\leq r\}$ denote the closed metric ball with center $x$ and radius $r$.

\begin{lem}\label{Equalityimpliespoint2}
If  $r_1+r_2< \inj(x_1)$ and $d(x_1,x_2)=r_1+r_2$, then $|D^{x_1}_{r_1}\cap D^{x_2}_{r_2}|=1$ and $D^{x_1}_{r_1}\cap D^{x_2}_{r_2}=S^{x_1}_{r_1}\cap S^{x_2}_{r_2}$.
\end{lem}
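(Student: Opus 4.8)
The plan is to exhibit one explicit point lying in both closed balls and then argue by the triangle inequality that any point of $D^{x_1}_{r_1}\cap D^{x_2}_{r_2}$ must coincide with it; the rigidity statement Lemma \ref{stricttriangle} supplies the uniqueness.

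First I would fix the minimizing geodesic realizing $d(x_1,x_2)$. Since $d(x_1,x_2)=r_1+r_2<\inj(x_1)$, there is a unique arclength parameterized minimizing geodesic $\gamma:[0,r_1+r_2]\rightarrow X$ with $x_1=\gamma(0)$ and $x_2=\gamma(r_1+r_2)$, just as in the opening of the proof of Lemma \ref{Equalityimpliespoint1}. Set $p=\gamma(r_1)$. Because $r_1<\inj(x_1)$, the restriction $\gamma|_{[0,r_1]}$ is minimizing, so $d(x_1,p)=r_1$; and since $\gamma|_{[0,r_1+r_2]}$ is minimizing, so is $\gamma|_{[r_1,r_1+r_2]}$, whence $d(x_2,p)=r_2$. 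Thus $p\in S^{x_1}_{r_1}\cap S^{x_2}_{r_2}\subseteq D^{x_1}_{r_1}\cap D^{x_2}_{r_2}$.

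Next I would show the two intersections coincide and contain only $p$. Let $q\in D^{x_1}_{r_1}\cap D^{x_2}_{r_2}$, so $d(x_1,q)\leq r_1$ and $d(x_2,q)\leq r_2$. The triangle inequality gives
$$r_1+r_2=d(x_1,x_2)\leq d(x_1,q)+d(q,x_2)\leq r_1+r_2,$$
forcing $d(x_1,q)=r_1$, $d(x_2,q)=r_2$, and equality in the triangle inequality $d(x_1,x_2)\leq d(x_1,q)+d(q,x_2)$. The first two equalities show $q\in S^{x_1}_{r_1}\cap S^{x_2}_{r_2}$, so $D^{x_1}_{r_1}\cap D^{x_2}_{r_2}\subseteq S^{x_1}_{r_1}\cap S^{x_2}_{r_2}$; the reverse inclusion is immediate, hence the two sets are equal. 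For the cardinality, Lemma \ref{stricttriangle} applied to the equality in the triangle inequality yields an arclength parameterized minimizing geodesic $\tau:[0,r_1+r_2]\rightarrow X$ with $x_1=\tau(0)$, $q=\tau(r_1)$, and $x_2=\tau(r_1+r_2)$. By uniqueness of the minimizing geodesic joining $x_1$ to $x_2$, which holds since $r_1+r_2<\inj(x_1)$, we get $\tau=\gamma$, hence $q=\tau(r_1)=\gamma(r_1)=p$. Therefore $D^{x_1}_{r_1}\cap D^{x_2}_{r_2}=\{p\}$, completing the proof.

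I do not anticipate a real obstacle: everything reduces to the triangle inequality and its equality case. The only point deserving a word of care is the uniqueness of the minimizing geodesic between $x_1$ and $x_2$, which is precisely the fact used at the start of the proof of Lemma \ref{Equalityimpliespoint1} and follows from $d(x_1,x_2)<\inj(x_1)$.
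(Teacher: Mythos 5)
Your proof is correct and follows essentially the same route as the paper's: exhibit $p=\gamma(r_1)$ on the unique minimizing geodesic from $x_1$ to $x_2$, then use the forced equality in the triangle inequality together with Lemma \ref{stricttriangle} and uniqueness of that geodesic to conclude any $q$ in the intersection of the closed balls equals $p$. Your explicit observation that $d(x_1,q)=r_1$ and $d(x_2,q)=r_2$ are forced is a slightly more careful treatment of the set-equality claim, but the argument is the same.
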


\begin{proof}
As $r_1+r_2<\inj(x_1)$ there exists a \textit{unique} arclength parameterized minimizing geodesic $\gamma:[0,r_1+r_2]\rightarrow X$ joining $x_1=\gamma(0)$ to $x_2=\gamma(r_1+r_2)$.  Let $p=\gamma(r_1)$.  As $\gamma$ is arclength parameterized and minimizing, $$d(x_1,p)=d(\gamma(0),\gamma(r_1))=r_1\,\,\,\,\,\text{and}\,\,\,\,\, d(p,x_2)=d(\gamma(r_1),\gamma(r_1+r_2))=r_2$$ and $p \in S^{x_1}_{r_1}\cap S^{x_2}_{r_2}$.  If $q \in D^{x_1}_{r_1}\cap D^{x_2}_{r_2}$, then $$r_1+r_2 =d(x_1,x_2) \leq d(x_1,q)+d(q,x_2)\leq r_1+r_2.$$  By Lemma \ref{stricttriangle}, there is a minimizing unit speed geodesic $\tau:[0,r_1+r_2] \rightarrow X$ joining $x_1=\tau(0)$ to $x_2=\tau(r_1+r_2)$ with $q=\tau(r_1)$.  As $\gamma$ is unique, $\gamma$ equals $\tau$ and $p=\gamma(r_1)=\tau(r_1)=q$.
\end{proof}

\noindent \underline{\textit{Proof of Sphere Intersections Theorem.}}\vskip 5pt
Lemmas \ref{Intersectimpliesinequalities}-\ref{Inequalitiesimpliesintersect} together imply statement (1) in the Theorem.  Lemma \ref{convexity} and Lemmas \ref{Pointimpliesequality}-\ref{Equalityimpliespoint2} together imply statement (2) in the Theorem.
\qed \vskip 5pt
%We conclude this section with a lemma that will be used in Section \ref{Main}.

\section{\bf Immersion Theorem}\label{Immersion}%NOT HAPPY WITH THIS SECTION AS IT CURRENTLY STANDS

Let $(X,g)$ be a complete Riemannian manifold with $\dim(X)\geq 2$ and let $(Y,h)$ be a Riemannian manifold. Let $d_X$ and $d_Y$ denote the complete geodesic metrics on $X$ and $Y$ induced by the Riemannian metrics $g$ and $h$. Let $f:X \rightarrow Y$ be a function and assume that $0$ is a limit point of $P_f$.  

The Myers-Steenrod Theorem \cite{MySt}  asserts that a surjective distance preserving function between Riemannian manifolds is a smooth Riemannian isometry.  The Immersion Theorem-- that $f$ is a Riemannian immersion -- is a generalization of the Myers-Steenrod Theorem.  The proof here adapts Palais' proof \cite{Pa} of the Myers-Steenrod Theorem as presented in \cite{KoNo}.  

A preliminary well-known lemma concerns functions between inner product spaces of possibly unequal dimensions. 

%. Given $x \in X$ and $s>0$, let $S^x_s:=\{y \in X\, \vert\, d_X(x,y)=s\}$ and let $\bar{x}:=f(x)$ denote the metric $s$-sphere with center $x$ and the image point of $x$, respectively.  Given a curve $c:I \rightarrow X$ and $t\in I$, let $c_t:=c(t)$.

%\begin{lem}\label{lem: intersect}
%Assume that $x_1,x_2 \in X$ and that $r_1,r_2 \in \mathbb{R}$ satisfy $0<r_i<\inj(x_i)$ for $i=1,2$.  If $\vert r_1-r_2 \vert\leq d_X(x_1,x_2) \leq r_1+r_2,$ then $S(x_1,r_1)\cap S(x_2,r_2) \neq \emptyset.$
%\end{lem}

%\begin{proof}
%Add a Proof.
%Without loss of generality, $r_1 \geq r_2$.  The hypotheses imply that the metric spheres $S(x_i,r_i)$ are codimension one embedded spheres in $X$.  By Jordan separation[REFERENCE], $X \setminus S(x_1,r_1)$ has two connected components.  If these spheres do not intersect, then $S(x_2,r_2)$ is contained in one of these connected components.  Let $c:[0,\infty) \rightarrow X$ be a geodesic with $c(0)=x_1$ and $c(d_X(x_1,x_2))=x_2.$  

%If $S(x_2,r_2)$ is contained in the component of $X\setminus S(x_1,r_1)$ containing $x_1$, then since $c(d_X(x_1,x_2)+r_2) \in S(x_2,r_2) \subset S(x_1,r_1)$, $d_X(x_1,x_2)+r_2<r_1$.  If $S(x_2,r_2)$ is contained in the other component, then  $d_X(x_1,x_2)>r_2+r_1.$
%\end{proof}

\begin{lem}\label{linear}
Let $V_1$ and $V_2$ be real inner product spaces.  If a function $F:V_1 \rightarrow V_2$ satisfies $\langle u,w \rangle = \langle F(u),F(w) \rangle$ for all $u,w\in V_1$, then $F$ is a linear isometric map.
\end{lem}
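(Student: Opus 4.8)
The plan is to establish linearity of $F$ first, after which the isometry property is immediate. I would avoid choosing an orthonormal basis and argue directly: fix vectors $u,v \in V_1$ and scalars $a,b \in \mathbb{R}$, and examine the non-negative real number $\|F(au+bv)-aF(u)-bF(v)\|^2$ computed in $V_2$. Expanding this norm-square by bilinearity of the inner product on $V_2$ produces a sum of six terms, each a scalar multiple of an inner product $\langle F(p),F(q)\rangle$ with $p,q \in \{au+bv,\,u,\,v\}$, and every such inner product can be transported to $V_1$ using the hypothesis $\langle F(p),F(q)\rangle=\langle p,q\rangle$.

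Once all the inner products are rewritten in $V_1$, the resulting expression collapses: the three ``squared'' terms assemble to $\langle au+bv,\,au+bv\rangle$, the two cross terms $-2a\langle au+bv,\,u\rangle-2b\langle au+bv,\,v\rangle$ equal $-2\langle au+bv,\,au+bv\rangle$, and the remaining term contributes $+\langle au+bv,\,au+bv\rangle$, for a total of $0$; the only thing to verify is this routine bookkeeping. Since the zero vector is the unique vector of zero norm in an inner product space, it follows that $F(au+bv)=aF(u)+bF(v)$, so $F$ is linear. For the isometry claim, specialize the hypothesis to $w=u$ to get $\|F(u)\|^2=\langle F(u),F(u)\rangle=\langle u,u\rangle=\|u\|^2$, so $F$ preserves norms; combined with linearity this is exactly the assertion that $F$ is a linear isometric map.

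I do not anticipate any genuine obstacle here: the argument is a single short computation once one decides to test linearity by expanding the appropriate norm-square, so the only care needed is in the algebra of the expansion. (An alternative route transports an orthonormal basis $\{e_i\}$ of $V_1$ to an orthonormal set $\{F(e_i)\}$ in $V_2$ and shows $\|F(u)-\sum_i\langle u,e_i\rangle F(e_i)\|^2=0$ by the same kind of expansion, but the basis-free version is cleaner and imposes no restriction on $\dim V_1$ or $\dim V_2$, which is convenient since in the intended application these are tangent spaces of possibly different dimensions.)
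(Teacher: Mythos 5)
Your proof is correct and takes essentially the same route as the paper: expand the norm-square of $F(au+bv)-aF(u)-bF(v)$, transport every inner product back to $V_1$ via the hypothesis, and observe that the result vanishes (the paper does this with the combination $F(\alpha u+w)-\alpha F(u)-F(w)$, a trivial variant). The grouping of terms in your bookkeeping paragraph is described a little loosely, but the total is indeed zero and the argument is sound.
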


\begin{proof}
It suffices to prove that $F$ is linear.  Let $u,w \in V_1$ and $\alpha \in \mathbb{R}$.  Use the hypothesis and bilinearity of the inner products to determine $$\langle F(\alpha u + w)-\alpha F(u) -F(w), F(\alpha u + w)-\alpha F(u) -F(w)\rangle=$$ $$\langle (\alpha u+w)-\alpha u-w, (\alpha u+w)-\alpha u-w \rangle=0.$$

%Let $\{v_1,\ldots,v_n\}$ be an orthonormal basis of $V$.  It suffices to prove that for all $(c_1,\ldots, c_n) \in \mathbb{R}^n$, \begin{equation}\label{linlin} F(c_1v_1+\cdots+c_n v_n)=c_1F(v_1)+\cdots+c_nF(v_n).\end{equation}

%The hypothesis implies that $\{F(v_1),\ldots,F(v_n)\}$ is an orthonormal set of vectors in $W$.  In particular, $\dim(V)\leq \dim(W)$.

%If $\dim(V)=\dim(W)$, then $\{F(v_1),\ldots,F(v_n)\}$ is an orthonormal basis of $W$. For each $i\in \{1,2,\ldots,n\}$, $$\langle F(c_1v_1+\cdots+c_n v_n),F(v_i)\rangle_2=\langle c_1 v_1+\cdots+ c_n v_n,v_i\rangle_1=c_i,$$ from which (\ref{linlin}) follows.

%If $k=\dim(W)-\dim(V)>0$, then complete $\{F(v_1),\ldots,F(v_n)\}$ to an orthonormal basis $\{F(v_1),\ldots, F(v_n),w_1,\ldots, w_k\}$ of $W$.  By the previous case, there exists $(b_1,\ldots,b_k) \in \mathbb{R}^k$ such that $$F(c_1v_1+\ldots+c_n v_n)=(c_1F(v_1)+\cdots +c_n F(v_n))+(b_1w_1+\cdots +b_k w_k).$$

%The hypothesis implies $$|| c_1v_1+\cdots+c_n v_n||_1^2=||F(c_1v_1+\cdots+c_n v_n)||_2^2.$$ Therefore $$c_1^2+\cdots+c_n^2=(c_1^2+\cdots+c_n^2)+(b_1^2+\cdots+b_k^2),$$ from which (\ref{linlin}) follows.

\end{proof}

%\begin{lem}\label{AI}
%If $l \in (0,\conv(X)) \cap P_f$, then for all $x,y \in X$, $d_Y(f(x),f(y)) \leq d_X(x,y)+2l$.
%\end{lem}

\begin{lem}\label{lem: lip}
The function $f$ is $1$-Lipschitz.
\end{lem}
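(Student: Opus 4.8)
The plan is to adapt the opening step of Palais' proof of the Myers--Steenrod theorem. Fix $p,q\in X$; we may assume $L:=d_X(p,q)>0$. Using completeness of $X$, choose an arclength minimizing geodesic $\gamma:[0,L]\to X$ from $p$ to $q$; since every subsegment of a minimizing geodesic is minimizing, $d_X(\gamma(s),\gamma(t))=|t-s|$ for all $s,t\in[0,L]$. The idea is to chain $p$ to $q$ by finitely many ``hops'' of length exactly $r$ for some $r\in P_f$ small: each such hop is preserved by $f$, so the triangle inequality in $Y$ bounds $d_Y(f(p),f(q))$ by roughly $L$. The number $L$ need not be an integer multiple of $r$, so one short ``remainder'' hop of length $<r$ is unavoidable, and absorbing it is the only place where something nontrivial is needed.

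Concretely, since the injectivity radius is positive and (by the text's remark) continuous, $\iota_0:=\min_{t\in[0,L]}\inj(\gamma(t))>0$. As $0$ is a limit point of $P_f$, pick $r_n\in P_f$ with $r_n\to 0$ and $0<r_n<\min\{\iota_0/3,\,L\}$. Set $k_n:=\lfloor L/r_n\rfloor\geq 1$, $\rho_n:=L-k_n r_n\in[0,r_n)$, and $y_i:=\gamma(\rho_n+i r_n)$ for $0\le i\le k_n$, so that $y_{k_n}=q$, $d_X(y_i,y_{i+1})=r_n$, and hence $d_Y(f(y_i),f(y_{i+1}))=r_n$ for each $i$. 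It remains to control $d_Y(f(p),f(y_0))$, where $d_X(p,y_0)=\rho_n<r_n$. When $\rho_n>0$, the inequalities $0=|r_n-r_n|\le d_X(p,y_0)\le r_n+r_n$ together with $r_n\le\inj(y_0)$ and $3r_n\le\iota_0\le\inj(p)$ permit an application of Lemma~\ref{Inequalitiesimpliesintersect}(2) (with $x_1=p$, $x_2=y_0$, $r_1=r_2=r_n$), producing a point $z_n\in S^{p}_{r_n}\cap S^{y_0}_{r_n}$; then $d_Y(f(p),f(z_n))=r_n=d_Y(f(z_n),f(y_0))$, so $d_Y(f(p),f(y_0))\le 2r_n$ (this also holds trivially when $\rho_n=0$, since then $y_0=p$). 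The triangle inequality in $Y$ now yields
$$d_Y(f(p),f(q))\le d_Y(f(p),f(y_0))+\sum_{i=0}^{k_n-1}d_Y(f(y_i),f(y_{i+1}))\le 2r_n+k_n r_n\le 2r_n+L,$$
and letting $n\to\infty$ gives $d_Y(f(p),f(q))\le L=d_X(p,q)$.

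The main obstacle is precisely this remainder term. Because $f$ is not yet known to be continuous, one cannot simply let $\rho_n\to 0$ and conclude $d_Y(f(p),f(y_0))\to 0$ by continuity; instead the Sphere Intersections Theorem is invoked to rewrite the short hop as a concatenation of two genuine $P_f$-hops through the auxiliary point $z_n$. The only subtlety there is verifying the hypotheses of Lemma~\ref{Inequalitiesimpliesintersect}: since $X$ is assumed here only to be complete, $\conv(X)$ may vanish, so one must use the injectivity-radius version~(2) rather than the convexity-radius version~(1); this is why the positive lower bound $\iota_0$ for the injectivity radius along the compact segment $\gamma([0,L])$ is recorded in advance and why $r_n$ is taken below $\iota_0/3$.
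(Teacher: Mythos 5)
Your proof is correct and follows essentially the same route as the paper's: chain along a minimizing geodesic by hops of a small preserved length $l\in P_f$, then absorb the sub-$l$ remainder by routing through a point of $S^{\cdot}_{l}\cap S^{\cdot}_{l}$ supplied by Lemma~\ref{Inequalitiesimpliesintersect}(2), contributing only $2l$ to the bound. The only differences are cosmetic (remainder placed at the start rather than the end, and a limit over $r_n\to 0$ in place of a fixed $\epsilon$ with $l<\epsilon/2$).
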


\begin{proof}
Let $x,y \in X$ and $\epsilon>0$.  As $X$ is complete, there exists a minimizing geodesic $$\gamma:[0,d_X(x,y)] \rightarrow X$$  joining $x$ to $y$.  Set $\delta=\min\{\inj(\gamma(t))\,\vert\, t\in[0,d_X(x,y)]\}$ and choose $l \in P_f$ with $l<\min\{d_X(x,y),\delta/3,\epsilon/2\}.$  As $l<d_X(x,y)$, there exists $k \in \mathbb{N}$ such that $$k\cdot l<d_X(x,y)\leq (k+1)\cdot l.$$  For each integer $i$ with $0 \leq i \leq k$ set $x_i=\gamma(il)$.  The above inequalities imply that $d_X(x_k,y)<l$.  As $l< \delta/3$, Lemma \ref{Inequalitiesimpliesintersect}-(2) implies there exists $z\in S^{x_k}_l\cap S^{y}_l$.  As $\gamma$ is minimizing, $d_X(x_i,x_{i+1})=l$ for each integer $0\leq i \leq k-1.$ 

As $l \in P_f$ and $l< \epsilon/2$, the triangle inequality implies that $$d_Y(f(x),f(y)) \leq \sum_{i=0}^{k-1} d_Y(f(x_i),f(x_{i+1}))+d_Y(f(x_k),f(z))+d_Y(f(z),f(y))$$ $$=k\cdot l +2\cdot l<d_X(x,y)+\epsilon.$$
\end{proof}

\begin{lem}\label{lem: shortgeos}
For each $l\in P_f$ and arclength parameterized minimizing geodesic $\gamma:[0,l]\rightarrow X$, the curve $f\circ \gamma:[0,l] \rightarrow Y$ is a an arclength parameterized minimizing geodesic.
\end{lem}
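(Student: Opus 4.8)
The plan is to show that the composition $c := f\circ\gamma:[0,l]\rightarrow Y$ is an isometric embedding of the interval $[0,l]$, since such a map is automatically an arclength parameterized minimizing geodesic. First I would record that $c$ is $1$-Lipschitz: the geodesic $\gamma$ is parameterized by arclength, hence $1$-Lipschitz, and $f$ is $1$-Lipschitz by Lemma \ref{lem: lip}, so $d_Y(c(s),c(t))\leq |s-t|$ for all $s,t\in[0,l]$.

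Next I would use the hypothesis $l\in P_f$. Since $\gamma$ is a minimizing geodesic of length $l$, its endpoints satisfy $d_X(\gamma(0),\gamma(l))=l$, and therefore $d_Y(c(0),c(l))=l$. The key step is then a triangle inequality squeeze: for any $0\leq s\leq t\leq l$,
$$l = d_Y(c(0),c(l)) \leq d_Y(c(0),c(s)) + d_Y(c(s),c(t)) + d_Y(c(t),c(l)) \leq s + (t-s) + (l-t) = l,$$
so all the inequalities are equalities and $d_Y(c(s),c(t)) = t-s$ for all $s\leq t$ in $[0,l]$.

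To finish, I would observe that $c$ has length at most $l$ (being $1$-Lipschitz on an interval of length $l$) and length at least $d_Y(c(0),c(l))=l$, hence $c$ is a shortest path between its endpoints and so a minimizing geodesic; moreover $d_Y(c(0),c(t))=t$ exhibits it as parameterized by arclength. I do not expect a serious obstacle here, as the argument is elementary; the only point that genuinely uses the Riemannian structure of $Y$, rather than just its metric structure, is this last implication that a rectifiable curve realizing the distance between its endpoints is a (smooth) minimizing geodesic, which is standard.
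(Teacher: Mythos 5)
Your proposal is correct and follows essentially the same route as the paper: establish that $f\circ\gamma$ is $1$-Lipschitz via Lemma \ref{lem: lip}, use $l\in P_f$ to get $d_Y(c(0),c(l))=l$, squeeze with the triangle inequality to conclude $c$ is an isometric map of the interval, and invoke the standard fact (the paper cites \cite[Proposition 3.8]{KoNo}) that such a map is an arclength parameterized minimizing geodesic.
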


\begin{proof}
By \cite[Proposition 3.8]{KoNo}, it suffices to prove that $f\circ \gamma$ is an isometric map of the interval $[0,l]$ into $Y$.  If $0\leq s_1<s_2\leq l$, then by Lemma \ref{lem: lip}, $d_Y(f(\gamma(s_1)),f(\gamma(s_2)))\leq s_2-s_1$.

The assumptions imply that $d_Y(f(\gamma(0)),f(\gamma(l)))=l$.  Therefore, if $0\leq t_1<t_2\leq l$, the triangle inequality implies $$l=d_Y(f(\gamma(0)),f(\gamma(l)))$$ $$\leq d_Y(f(\gamma(0)),f(\gamma(t_1)))+d_Y(f(\gamma(t_1)),f(\gamma(t_2)))+d_Y(f(\gamma(t_2)),f(\gamma(l)))$$ $$\leq (t_1-0)+(t_2-t_1)+(l-t_2)=l.$$ Conclude $d_Y(f(\gamma(t_1)),f(\gamma(t_2)))=t_2-t_1$.
\end{proof}

\noindent \underline{\textit{Proof of Immersion Theorem.}}\vskip 5pt

Fix $x \in X$.  Let $S_xX$ and $T_xX$ denote the unit tangent sphere and tangent space of $X$ at $x$, respectively.  Let $S_{f(x)}Y$ and $T_{f(x)}Y$ denote the unit tangent sphere and tangent space of $Y$ at $f(x)$, respectively. 

Choose $l \in P_f$ with $l<\min\{\inj(x),\inj(f(x))\}$.  Given $u \in S_xX$, denote by $\gamma_u:[0,l]\rightarrow X$ the arclength parameterized minimizing geodesic with $\dot{\gamma}_u(0)=u$ and let $\bar{\gamma}_u=f\circ \gamma_u$.  By  Lemma \ref{lem: shortgeos}, $\bar{\gamma}_u:[0,l]\rightarrow Y$ is an arclength parameterized minimizing geodesic in $Y$. Define $$F:S_xX \rightarrow S_{f(x)}Y$$ by $F(u)=\dot{\bar{\gamma}}_u(0)$ for each $u \in S_xX$.  This function extends to a function $$L:T_xX \rightarrow T_{f(x)}Y$$ defined by $L(\alpha u)=\alpha F(u)$ for each $\alpha \in \mathbb{R}$ and $u \in S_xX$.

Let $\exp_x$ and $\exp_{f(x)}$ denote the restrictions of the exponential maps of $X$ at $x$ and of $Y$ at $f(x)$ to the open balls $B^{0}_l\subset T_xX$ and $B^{0}_l\subset T_{f(x)} Y$, respectively.  By the choice of $l$, $\exp_x$ and $\exp_{f(x)}$ are diffeomorphisms onto the open balls $B^x_l\subset X$ and $B^{f(x)}_l\subset Y$.  Moreover, the restriction of $f$ to $B^{x}_l$ is given by \begin{equation}\label{ff} f=\exp_{f(x)}\circ L \circ \exp_{x}^{-1}.\end{equation} It suffices to prove that for each $u,w \in T_xX,$  \begin{equation}\label{key} g(u,w)=h(L(u),L(w)),\end{equation} as will now be explained.  If (\ref{key}) holds, then by Lemma \ref{linear}, $L$ is linear and isometric, and by (\ref{ff}), $f$ is smooth with derivative map at $x$ equal to $L$.  

It remains to establish the validity of (\ref{key}).  As $L$ satisfies $L(\alpha v)=\alpha L(v)$ for each $\alpha \in \mathbb{R}$ and $v \in V$ and carries unit vectors to unit vectors, it suffices to demonstrate (\ref{key}) for distinct unit vectors $u$ and $w$.  By Cauchy-Schwartz,  there exist $\theta$ and $\bar{\theta}$ such that $\cos(\theta)=g(u,w)$ and $\cos(\bar{\theta})=h(F(u),F(w))$.  

Let $\gamma_u$, $\gamma_w$, $\bar{\gamma}_u$, and $\bar{\gamma}_v$ be geodesic segments as defined above.  By the law of cosines (see e.g. \cite[Lemma, Page 170]{KoNo}), $$\cos(\theta)=\lim_{s \rightarrow 0} \frac{2s^2-d^2_X(\gamma_u(s),\gamma_w(s))}{2s^2} \,\,\,\,\,\text{and}\,\,\,\,\,\cos(\bar{\theta})=\lim_{s \rightarrow 0} \frac{2s^2-d^2_Y(\bar{\gamma}_u(s),\bar{\gamma}_w(s))}{2s^2}.$$  Therefore, it suffices to find a sequence $\{s_i\}$ of positive real numbers that converge to zero and satisfy $d_X(\gamma_u(s_i),\gamma_w(s_i))=d_Y(\bar{\gamma}_u(s_i),\bar{\gamma}_w(s_i)).$  

Define $h:[0,l] \rightarrow X$ by $h(s)=d(\gamma_u(s),\gamma_w(s))$.  Then $h$ is continuous and $h(0)=0$.  As $u$ and $w$ are distinct, there exists $\epsilon>0$ such that the restriction of $h$ to $[0,\epsilon]$ is a homeomorphism onto its image $[0,h(\epsilon)]$.  As $0$ is a limit point of $P_f$, the set $P_f \cap [0,h(\epsilon)]$ contains a sequence $\{t_i\}$ converging to zero.  Letting $s_i=h^{-1}(t_i)$, the sequence $\{s_i\}$ has the desired properties above.
\qed

\section{\bf Preserved Distances}\label{preserve}
In this section, $X$ denotes a complete Riemannian manifold with $\conv(X)>0$ and $\dim(X)\geq 2$.  Let $f:X \rightarrow X$ be a function.

\begin{lem}\label{same}
If $0<r<\conv(X)$ and $x,y \in X$ satisfy $S^{x}_r=S^{y}_r$, then $x=y$.
\end{lem}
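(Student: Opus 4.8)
The plan is a short argument by contradiction. Suppose $x \neq y$ and set $\rho = d(x,y) > 0$; I will exhibit a point that is forced to lie on both spheres and then read off $d(x,y)$ from it in two incompatible ways. First I would record the standing consequences of $0 < r < \conv(X)$: by Lemma~\ref{convexity}, $r < \conv(X) \le \tfrac{1}{2}\inj(X)$, so in particular $r < \inj(x)$, $r < \inj(y)$, and $2r < \inj(X)$. Since $X$ is complete, fix an arclength parametrized minimizing geodesic $\gamma\colon[0,\rho]\to X$ with $\gamma(0)=x$ and $\gamma(\rho)=y$, extend it to a complete geodesic $\bar{\gamma}\colon\mathbb{R}\to X$, and put $z=\bar{\gamma}(r)$.

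The heart of the matter is computing $d(y,z)$ twice. Because $r<\inj(x)$, the segment $\bar{\gamma}|_{[0,r]}$ is minimizing, so $d(x,z)=r$, i.e. $z\in S^x_r$; the hypothesis $S^x_r=S^y_r$ then gives $z\in S^y_r$, so $d(y,z)=r$. On the other hand, $\bar{\gamma}$ is minimizing on $[0,r]$ (since $r<\inj(x)$) and on $[0,\rho]$ (there it is $\gamma$), and whichever of these two intervals is the larger contains both parameters $r$ and $\rho$; hence $d(y,z)=d(\bar{\gamma}(\rho),\bar{\gamma}(r))=|\rho-r|$. Comparing the two values yields $|\rho-r|=r$, so $\rho\in\{0,2r\}$, and since $\rho>0$ we are reduced to the case $\rho=2r$.

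To eliminate $\rho=2r$ I would invoke the Sphere Intersections Theorem. Statement~(2), applied with $r_1=r_2=r\in(0,\conv(X))$ and $d(x_1,x_2)=d(x,y)=2r=r_1+r_2$, gives $|S^x_r\cap S^y_r|=1$. But $S^x_r=S^y_r$ forces $S^x_r\cap S^y_r=S^x_r$, and $S^x_r$ contains the two distinct points $\bar{\gamma}(r)$ and $\bar{\gamma}(-r)$: both lie in $S^x_r$ because $r<\inj(x)$, and they are distinct since $\bar{\gamma}|_{[-r,r]}$ has length $2r<\inj(X)$ and is therefore minimizing, so $d(\bar{\gamma}(-r),\bar{\gamma}(r))=2r>0$. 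This contradiction shows $\rho=0$, i.e. $x=y$.

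The argument is essentially elementary, and I expect the only real points of care to be the bookkeeping of which subarcs of $\bar{\gamma}$ are minimizing — that is, keeping track of $r$, $2r$, and $\rho$ relative to the pertinent injectivity radii — together with the borderline value $d(x,y)=2r$. That last case is the one place where the two distance computations are mutually consistent, so one genuinely needs the extra input of statement~(2) of the Sphere Intersections Theorem (equivalently, the fact that a metric sphere of radius below the injectivity radius is never a single point) to close the argument.
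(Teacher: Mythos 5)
Your proof is correct, but it takes a genuinely different route from the paper's. The paper argues directly: it takes an arclength geodesic $\gamma:[-r,r]\to X$ centered at $x$, notes that both endpoints $\gamma(\pm r)$ lie on $S^x_r=S^y_r$ and are exactly $2r$ apart (since $2r<\inj(X)$), so the triangle inequality through $y$ is an equality; Lemma \ref{stricttriangle} then places $y$ at the midpoint of a minimizing geodesic joining $\gamma(-r)$ to $\gamma(r)$, and uniqueness of that minimizing segment forces $y=x$. You instead work along the segment from $x$ to $y$, compute $d(y,\bar{\gamma}(r))$ in two ways to conclude $d(x,y)\in\{0,2r\}$, and then need a separate argument --- statement (2) of the Sphere Intersections Theorem, i.e.\ Lemma \ref{Equalityimpliespoint2}, against the fact that $S^x_r$ contains the two points $\bar{\gamma}(\pm r)$ --- to exclude the borderline case $d(x,y)=2r$. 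Both arguments are sound, and since the Sphere Intersections Theorem is established in section \ref{Intersection}, well before this lemma, there is no circularity in your appeal to it. What the paper's choice buys is economy: picking the two antipodal points $\gamma(\pm r)$ rather than a single point on the segment $xy$ kills the $d(x,y)=2r$ case before it arises and requires nothing beyond Lemma \ref{stricttriangle}; your version costs the extra case analysis and one more imported lemma, but the two-way distance computation it rests on is entirely elementary.
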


\begin{proof}
Let $\gamma:[-r,r]\rightarrow X$ be an arclength parameterized geodesic with $\gamma(0)=x$.  By Lemma \ref{convexity}, $2r<\inj(X)$ so that $\gamma$ is the unique minimizing geodesic segment with endpoints $\gamma(-r)$ and $\gamma(r)$.  Therefore, $d(\gamma(\pm r), x)=r$ and $d(\gamma(-r),\gamma(r))=2r$.  The triangle inequality and the hypothesis $S^x_r=S^y_r$ imply $$2r=d(\gamma(-r),\gamma(r))\leq d(\gamma(-r),y)+d(y,\gamma(r))=r+r=2r.$$  By Lemma \ref{stricttriangle} there is a minimizing geodesic with endpoints $\gamma(-r)$ and $\gamma(r)$ and midpoint $y$.  As the segment $\gamma$ is unique, $x=y$.
\end{proof}

\begin{rem}
The convexity hypothesis in Lemma \ref{same} is necessary as illustrated by metric spheres in $S^2$ with antipodal centers and radii $\frac{1}{2}\pi$. 
\end{rem}

\begin{lem}\label{bijective}
If $(0,\conv(X))\cap SP_f \neq \emptyset$, then $f$ is injective.
\end{lem}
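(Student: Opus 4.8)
The plan is to argue by contradiction: suppose $f(x)=f(y)$ for some $x\neq y$ in $X$, fix a radius $r\in(0,\conv(X))\cap SP_f$, and show that the metric spheres $S^x_r$ and $S^y_r$ must coincide, which contradicts Lemma \ref{same}.

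The key observation is that strong preservation of $r$ allows one to transport the defining condition for $S^x_r$ to the defining condition for $S^y_r$ through the coincidence $f(x)=f(y)$. Concretely, let $z\in X$ and suppose $z\in S^x_r$, i.e. $d(x,z)=r$. Since $r\in SP_f\subset P_f$, this gives $d(f(x),f(z))=r$; substituting $f(x)=f(y)$ yields $d(f(y),f(z))=r$; and now the reverse implication in the definition of $SP_f$ forces $d(y,z)=r$, i.e. $z\in S^y_r$. Hence $S^x_r\subseteq S^y_r$, and the symmetric argument (interchanging the roles of $x$ and $y$) gives the reverse inclusion, so $S^x_r=S^y_r$.

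Finally, since $0<r<\conv(X)$, Lemma \ref{same} applies to the equality $S^x_r=S^y_r$ and yields $x=y$, contradicting $x\neq y$. Therefore $f$ is injective.

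There is essentially no obstacle here once Lemma \ref{same} is available; the two points worth flagging are that one genuinely uses the biconditional defining $SP_f$ rather than the one-sided condition $P_f$ (which only controls distances that are exactly $r$ and so does not let one conclude $d(y,z)=r$ from $d(f(y),f(z))=r$), and that the convexity hypothesis $r<\conv(X)$ enters solely through Lemma \ref{same}, whose failure on $S^2$ at radius $\frac{1}{2}\pi$ (noted in the remark following it) is precisely what makes the convexity restriction necessary in the present statement as well.
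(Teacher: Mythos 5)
Your proof is correct and follows essentially the same route as the paper: take a point on $S^x_r$, use the forward direction of $SP_f$ together with $f(x)=f(y)$ and then the reverse direction to land on $S^y_r$, conclude $S^x_r=S^y_r$, and invoke Lemma \ref{same}. Your remarks about why the biconditional in $SP_f$ is genuinely needed and where convexity enters are accurate but not part of the paper's (more terse) argument.
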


\begin{proof}
Let $r \in (0,\conv(X)) \cap SP_f$ and assume that $f(x)=f(y)$.  If $a\in S^x_r$, then since $r \in SP_f$, $$r=d(a,x)=d(f(a),f(x))=d(f(a),f(y))=d(a,y).$$  Conclude $S^{x}_r=S^{y}_r$ and by Lemma \ref{same}, $x=y$.
\end{proof}

\begin{lem}\label{continuous}
If $f$ is continuous and $(0,\conv(X))\cap SP_f \neq \emptyset$, then $f$ is surjective.
\end{lem}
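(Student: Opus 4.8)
The plan is to combine injectivity (already in hand from Lemma \ref{bijective}) with a topological argument: a continuous injective map of a connected manifold into itself is automatically a local homeomorphism onto an open set (by invariance of domain), so $f(X)$ is open in $X$; it then suffices to prove $f(X)$ is also closed, whence $f(X)=X$ by connectedness. So the first step is to record that $X$ is connected — or rather, to reduce to a connected component, since a priori $X$ need only be a Riemannian manifold; in fact the conjecture and these lemmas are interesting componentwise, and I would simply remark $X$ is connected (as is implicit throughout) or work one component at a time and use that $f$ restricted to a component still strongly preserves $r$.

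The heart of the matter is closedness of the image. First I would fix $r \in (0,\conv(X))\cap SP_f$. The key geometric observation is that $f$ preserves the family of metric spheres of radius $r$ in a strong two-sided sense: if $a \in S^x_r$ then $d(f(a),f(x)) = r$, and conversely if $d(f(a),f(x))=r$ then $d(a,x)=r$; moreover since $f$ is injective and (by Lemma \ref{bijective} plus Lemma \ref{same}) the center of such a sphere is unique, the point $f(x)$ is determined by the set $f(S^x_r) \subseteq S^{f(x)}_r$. Now suppose $y \in X$ lies in the closure of $f(X)$; I want $y \in f(X)$. Pick any $x_0\in X$ and walk along a chain: the strategy is to show that $S^y_r \cap f(X)$ is large enough to force $y$ itself into $f(X)$. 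Concretely, choose a point $p \in f(X)$ with $d(p,y) = r$ (such $p$ exists: points near $y$ in $f(X)$ can be connected to $y$, and along a short geodesic from such a nearby point one reaches distance exactly $r$ after using the Sphere Intersections Theorem to stay in controlled balls — or more simply, pick $q\in f(X)$ with $0<d(q,y)<2r$ close to $y$, then by Lemma \ref{Inequalitiesimpliesintersect} the spheres $S^q_{r}$ and $S^y_r$... ) — let me instead use the cleaner route below.

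Cleaner route: let $y\in\overline{f(X)}$. Choose $x\in X$ with $f(x)$ close enough to $y$ that $d(f(x),y)<\conv(X)$, and in fact $<r$ after possibly using that small distances are limit points. Actually the slickest argument: show $0$ is a limit point of $P_f$, then invoke the \textbf{Immersion Theorem} to conclude $f$ is a Riemannian immersion, hence (being injective and continuous, with $X$ complete and $\dim X\geq 2$ so that source and target have equal dimension) a local diffeomorphism with open image; then $f$ is a proper map because $f$ is $1$-Lipschitz in one direction and $SP_f$ gives a lower bound making preimages of bounded sets bounded, and a proper local diffeomorphism into a connected manifold is surjective. So the real plan is: (1) produce from $r\in SP_f$ a sequence in $SP_f$ tending to $0$ — this is exactly the sphere-intersection machinery that powers Theorems A–C, using that $f$ injective preserves spheres of radius $r$, intersecting two such spheres with centers at controlled distance gives (by the Sphere Intersections Theorem, parts on $|{\cdot}|=1$ and on nonempty intersections) preserved distances strictly between $0$ and $2r$, and iterating drives the radius to $0$; (2) apply the Immersion Theorem to get that $f$ is a Riemannian immersion, so $\dim Y=\dim X$ forces $f$ to be a local diffeomorphism, hence $f(X)$ open; (3) show $f(X)$ is closed: given $y\in\overline{f(X)}$ take $f(x_n)\to y$, note $\{x_n\}$ is bounded because $SP_f$ forces $d(x_n,x_m)\to 0$ to have $d(f(x_n),f(x_m))\to 0$... more precisely use that $f$ is $1$-Lipschitz downstairs-to-upstairs fails direction-wise, so instead use completeness via the local-diffeo-plus-injective structure to lift the convergent sequence, concluding $x_n$ converges and $y=f(\lim x_n)$; (4) conclude $f(X)=X$ by connectedness.

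The main obstacle is step (3), closedness of the image — equivalently, showing $f$ is a proper map, since an injective local homeomorphism need not be surjective (e.g. an open disk into the plane). The resolution should use the strong preservation hypothesis crucially: if $x_n$ does not subconverge, then it leaves every compact set, but one can cover a large ball by finitely many metric balls of radius $r/2$ and use $r\in SP_f$ together with injectivity to argue $f$ cannot collapse an infinite escaping sequence to a convergent one. Alternatively, and I suspect this is the route the authors take, once $f$ is known to be a Riemannian immersion (hence local isometry, by the Immersion Theorem upgraded) and injective, one checks $f$ is distance-nonincreasing and that preserved spheres give $d(f(x),f(x'))=r\iff d(x,x')=r$ uniformly, which pins the image down to be all of the complete connected target. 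I would write the proof invoking the \textbf{Immersion Theorem} and Lemma \ref{bijective}, then giving the connectedness argument for openness and the properness/completeness argument for closedness.
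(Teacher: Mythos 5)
Your opening frame (injectivity from Lemma \ref{bijective}, invariance of domain for openness, then closedness of the image) matches the paper's, but the route you actually commit to has a genuine gap: it is circular. You propose to ``produce from $r\in SP_f$ a sequence in $SP_f$ tending to $0$'' and then invoke the Immersion Theorem. A single $r\in SP_f$ does not force $0$ to be a limit point of $P_f$ --- that is precisely why Theorems A--C require extra hypotheses (an irrational ratio of two preserved distances, two-point homogeneity, or a periodic geodesic flow). Moreover, every lemma in the paper that manufactures new preserved distances (Lemmas \ref{Preserve}, \ref{Twice}, \ref{Many}, \ref{oy}, Proposition \ref{Difference}) assumes $f$ is \emph{surjective}, which is exactly what you are trying to prove here. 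Your fallback for closedness (properness, lifting convergent sequences) is never actually carried out and, as you yourself note, the available Lipschitz estimate points in the wrong direction for it.

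The missing idea is to apply invariance of domain not to $X$ but to the metric spheres themselves. For $r\in(0,\conv(X))\cap SP_f$ and any $x\in X$, the restriction of $f$ to $S^x_r$ is a continuous injection of a compact connected $(\dim X-1)$-manifold into $S^{f(x)}_r$ (the containment only uses $r\in P_f$; injectivity is Lemma \ref{bijective}); by invariance of domain its image is open in $S^{f(x)}_r$, by compactness it is closed, and by connectedness of the target sphere (here $\dim X\geq 2$ is used) the restriction is a homeomorphism \emph{onto} $S^{f(x)}_r$. Now chain two such spheres: if $p=f(a)$ and $d(p,q)\leq 2r$, Lemma \ref{Inequalitiesimpliesintersect} gives $z\in S^p_r\cap S^q_r$; pull $z$ back to $b\in S^a_r$ with $f(b)=z$, and then pull $q\in S^z_r=f(S^b_r)$ back to $c\in S^b_r$ with $f(c)=q$. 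Hence the image of $f$ contains $D^p_{2r}$ whenever it contains $p$, so the image is both open and closed and equals $X$ by connectedness. No appeal to the Immersion Theorem or to any sequence of preserved distances tending to zero is needed.
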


\begin{proof}
Let $r \in (0,\conv(X))\cap SP_f$.  As $X$ is connected, it suffices to prove that the image of $f$ is both open and closed.  To achieve this, we demonstrate that if $p$ is in the image of $f$, then so too is the closed ball $D^{p}_{2r}$.

As a preliminary observation, note that if $x \in X$, then by Lemma \ref{bijective} and invariance of domain, the restriction of $f$ to $S^{x}_r$ is a homeomorphism onto $S^{f(x)}_r$.  

Now assume $p=f(a)$ and $d(p,q) \leq 2r$.  By Lemma \ref{Inequalitiesimpliesintersect}-(1), there exists $z \in S^{p}_r\cap S^q_r$. As the restriction of $f$ to $S^{a}_r$ is a homeomorphism onto $S^{p}_r$ and $z \in S^{p}_r$, there exists $b \in S^{a}_r$ with $f(b)=z$.  As the restriction of $f$ to $S^{b}_r$ is a homeomorphism onto $S^{z}_r$ and $q \in S^{z}_r$, there exists $c \in S^{b}_r$ with $f(c)=q$, completing the proof.  
\end{proof}

\begin{lem}\label{Preserve}
If $f$ is surjective, $x_1,x_2 \in X$, and $r_1,r_2 \in SP_f$,  then $$f(S^{x_1}_{r_1} \cap S^{x_2}_{r_2})=S^{f(x_1)}_{r_1}\cap S^{f(x_2)}_{r_2}.$$
\end{lem}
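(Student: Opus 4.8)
The plan is to prove the two inclusions separately, exploiting that $f$ is surjective and strongly preserves both $r_1$ and $r_2$. For the inclusion $f(S^{x_1}_{r_1} \cap S^{x_2}_{r_2}) \subset S^{f(x_1)}_{r_1}\cap S^{f(x_2)}_{r_2}$, I would take $z \in S^{x_1}_{r_1} \cap S^{x_2}_{r_2}$, so that $d(x_i,z)=r_i$ for $i=1,2$; since $r_i \in SP_f \subset P_f$, this forces $d(f(x_i),f(z))=r_i$, hence $f(z) \in S^{f(x_i)}_{r_i}$ for each $i$. This direction uses only that $r_1,r_2 \in P_f$ and does not require surjectivity.

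For the reverse inclusion $S^{f(x_1)}_{r_1}\cap S^{f(x_2)}_{r_2} \subset f(S^{x_1}_{r_1} \cap S^{x_2}_{r_2})$, I would start with $w \in S^{f(x_1)}_{r_1}\cap S^{f(x_2)}_{r_2}$. By surjectivity of $f$, choose $z \in X$ with $f(z)=w$. Then $d(f(x_i),f(z))=d(f(x_i),w)=r_i$ for $i=1,2$. Here is where the \textit{strong} preservation is essential: since $r_i \in SP_f$, the biconditional $d(x,y)=r_i \iff d(f(x),f(y))=r_i$ applies in the reverse direction, so $d(f(x_i),f(z))=r_i$ yields $d(x_i,z)=r_i$, i.e. $z \in S^{x_i}_{r_i}$ for each $i$. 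Thus $z \in S^{x_1}_{r_1}\cap S^{x_2}_{r_2}$ and $w=f(z) \in f(S^{x_1}_{r_1}\cap S^{x_2}_{r_2})$, completing this inclusion.

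Combining the two inclusions gives the claimed set equality. The argument is entirely formal once one keeps careful track of which direction of the defining biconditional for $SP_f$ is being used and where surjectivity enters: surjectivity is needed precisely to produce a preimage $z$ of an arbitrary point $w$ in the target intersection, and strong (as opposed to mere) preservation is needed to pull the distance equalities back from $Y$ to $X$. I do not anticipate a genuine obstacle here; the only point requiring care is not to conflate $P_f$ with $SP_f$, since the forward inclusion needs only the former while the reverse genuinely needs the latter.
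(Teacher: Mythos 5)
Your proof is correct and follows essentially the same two-inclusion argument as the paper: the forward inclusion uses only $r_1,r_2\in P_f$, and the reverse inclusion uses surjectivity to produce a preimage together with the reverse direction of the $SP_f$ biconditional. No issues.
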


\begin{proof}
If $x \in S^{x_1}_{r_1}\cap S^{x_2}_{r_2}$, then $d(x,x_1)=r_1$ and $d(x,x_2)=r_2$.  As $r_1, r_2 \in P_f$, $d(f(x),f(x_1))=r_1$ and $d(f(x),f(x_2))=r_2$.  Therefore $$f(S^{x_1}_{r_1} \cap S^{x_2}_{r_2})\subset S^{f(x_1)}_{r_1}\cap S^{f(x_2)}_{r_2}.$$  %By Lemma \ref{bijective}, $f$ is bijective; repeating the argument with $f^{-1}$ demonstrates $$S^{f(x_1)}_{r_1}\cap S^{f(x_2)}_{r_2}\subset f(S^{x_1}_{r_1} \cap S^{x_2}_{r_2}).$$

If $y \in S^{f(x_1)}_{r_1} \cap S^{f(x_2)}_{r_2}$, then $d(y,f(x_1))=r_1$ and $d(y,f(x_2))=r_2$.  There exists $x$ such that $f(x)=y$.  As $r_1, r_2 \in SP_f$, $d(x,x_1)=r_1$ and $d(x,x_2)=r_2$.  Therefore $x \in S^{x_1}_{r_1} \cap S^{x_2}_{r_2}$ and $$S^{f(x_1)}_{r_1}\cap S^{f(x_2)}_{r_2}\subset f(S^{x_1}_{r_1} \cap S^{x_2}_{r_2}).$$
\end{proof}

\begin{lem}\label{Option}
Let $f$ be surjective and $r_1, r_2 \in SP_f$. If $r_1>r_2$, $r_1+r_2< \inj(X)$, and $d(x_1,x_2) \in \{r_1-r_2,r_1+r_2\}$, then $d(f(x_1),f(x_2)) \in \{r_1-r_2,r_1+r_2\}.$
\end{lem}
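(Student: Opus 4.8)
The plan is to run both possibilities for $d(x_1,x_2)$ through the same three-step pipeline: first pin down the intersection $S^{x_1}_{r_1}\cap S^{x_2}_{r_2}$ as a \emph{single} point using one of Lemmas \ref{Equalityimpliespoint1}--\ref{Equalityimpliespoint2}, then push that singleton forward by $f$ using surjectivity and Lemma \ref{Preserve}, and finally read off the distance $d(f(x_1),f(x_2))$ from Lemma \ref{Pointimpliesequality}. The key observation that makes the injectivity-radius hypotheses portable is that $\inj(X)$ is the infimum of $\inj(p)$ over all $p\in X$, so the single inequality $r_1+r_2<\inj(X)$ yields $r_2<r_1<r_1+r_2<\inj(p)$ for every point $p\in X$, in particular for $p\in\{x_1,x_2,f(x_1),f(x_2)\}$; also $r_2>0$ since $r_2\in SP_f$.

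First I would dispose of the case $d(x_1,x_2)=r_1-r_2$. Here $0<r_2<r_1<\inj(x_1)$ by the bookkeeping above, so Lemma \ref{Equalityimpliespoint1} gives $|S^{x_1}_{r_1}\cap S^{x_2}_{r_2}|=1$. In the case $d(x_1,x_2)=r_1+r_2$ one instead uses $r_1+r_2<\inj(x_1)$ to invoke Lemma \ref{Equalityimpliespoint2}, which gives $D^{x_1}_{r_1}\cap D^{x_2}_{r_2}=S^{x_1}_{r_1}\cap S^{x_2}_{r_2}$ and that this set has cardinality one. So in either case $S^{x_1}_{r_1}\cap S^{x_2}_{r_2}=\{z\}$ for a single point $z\in X$.

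Next, since $f$ is surjective and $r_1,r_2\in SP_f$, Lemma \ref{Preserve} yields
$$S^{f(x_1)}_{r_1}\cap S^{f(x_2)}_{r_2}=f\bigl(S^{x_1}_{r_1}\cap S^{x_2}_{r_2}\bigr)=f(\{z\})=\{f(z)\},$$
so $|S^{f(x_1)}_{r_1}\cap S^{f(x_2)}_{r_2}|=1$. Now apply Lemma \ref{Pointimpliesequality} with $f(x_1),f(x_2)$ in place of $x_1,x_2$: its hypotheses $\dim(X)\ge 2$, $r_2\le r_1$, $r_2<\inj(f(x_2))$ and $r_1+r_2<\inj(f(x_1))$ all hold (the first is the standing assumption of this section, the others come from the bookkeeping), so we conclude $d(f(x_1),f(x_2))=r_1-r_2>0$ or $d(f(x_1),f(x_2))=r_1+r_2$, i.e.\ $d(f(x_1),f(x_2))\in\{r_1-r_2,r_1+r_2\}$, as desired. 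I do not anticipate a genuine obstacle here; the only point requiring care is verifying the injectivity-radius conditions \emph{at the image points} $f(x_1),f(x_2)$, which is exactly why one phrases everything in terms of the global quantity $\inj(X)$ rather than $\inj(x_i)$.
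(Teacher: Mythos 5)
Your proposal is correct and follows essentially the same route as the paper: singleton intersection via Lemmas \ref{Equalityimpliespoint1}--\ref{Equalityimpliespoint2}, push-forward via Lemma \ref{Preserve}, then read off the distance via Lemma \ref{Pointimpliesequality}. Your explicit verification of the injectivity-radius hypotheses at the image points $f(x_1),f(x_2)$ is a detail the paper leaves implicit, and it is handled correctly.
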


\begin{proof}
By Lemmas \ref{Equalityimpliespoint1} and \ref{Equalityimpliespoint2}, $|S^{x_1}_{r_1} \cap S^{x_2}_{r_2}|=1$.  Therefore, $|f(S^{x_1}_{r_1} \cap S^{x_2}_{r_2})|=1$.  By Lemma \ref{Preserve}, $|S^{f(x_1)}_{r_1}\cap S^{f(x_2)}_{r_2}|=1$. By Lemma \ref{Pointimpliesequality}, $$d(f(x_1),f(x_2)) \in \{r_1-r_2,r_1+r_2\}.$$
\end{proof}

\begin{lem}\label{Twice}
If $f$ is surjective and $r \in (0,\conv(X)) \cap SP_f$,  then $2r \in SP_f$.  %If $f$ is bijective, then $2r \in SP_f$.
\end{lem}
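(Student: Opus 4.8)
The plan is to unwind the definition of $SP_f$ for the distance $2r$: we must check that $d^{-1}(2r)\neq\emptyset$, that $d(x,y)=2r$ implies $d(f(x),f(y))=2r$, and conversely. First I would record the numerical facts that make everything run. Since $\conv(X)>0$, Lemma \ref{convexity} gives $2r<2\conv(X)\leq \inj(X)\leq \inj(z)$ for every $z\in X$; in particular geodesic segments of length $2r$ emanating from any point are minimizing, so $d^{-1}(2r)\neq\emptyset$, and $r<\inj(z)$ and $2r<\inj(z)$ for all $z$. I would also note that, because $r\in(0,\conv(X))\cap SP_f$ makes this intersection nonempty, Lemma \ref{bijective} shows $f$ is injective, hence (being surjective) bijective; bijectivity is used only in the converse direction.

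For the forward direction, suppose $d(x,y)=2r=r+r$. Since $r+r<\inj(x)$, Lemma \ref{Equalityimpliespoint2} gives $|S^{x}_{r}\cap S^{y}_{r}|=1$. As $r\in SP_f$, Lemma \ref{Preserve} (applied with $r_1=r_2=r$) yields $f(S^{x}_{r}\cap S^{y}_{r})=S^{f(x)}_{r}\cap S^{f(y)}_{r}$, and since the left-hand side is the image of a one-point set, $|S^{f(x)}_{r}\cap S^{f(y)}_{r}|=1$. Now apply Lemma \ref{Pointimpliesequality} with $x_1=f(x)$, $x_2=f(y)$, $r_1=r_2=r$: its hypotheses hold by the first paragraph, and its conclusion is $d(f(x),f(y))=r-r>0$ or $d(f(x),f(y))=r+r$. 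The first alternative asserts $0>0$ and is impossible, so $d(f(x),f(y))=2r$.

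For the converse, suppose $d(f(x),f(y))=2r$. Exactly as above, Lemma \ref{Equalityimpliespoint2} gives $|S^{f(x)}_{r}\cap S^{f(y)}_{r}|=1$, and Lemma \ref{Preserve} gives $f(S^{x}_{r}\cap S^{y}_{r})=S^{f(x)}_{r}\cap S^{f(y)}_{r}$; since $f$ is injective this forces $|S^{x}_{r}\cap S^{y}_{r}|=1$. Lemma \ref{Pointimpliesequality} applied with $x_1=x$, $x_2=y$, $r_1=r_2=r$ then gives, as before, $d(x,y)=2r$. Combining the two directions with $d^{-1}(2r)\neq\emptyset$ shows $2r\in SP_f$.

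The only point requiring care — and the reason this is not an immediate instance of Lemma \ref{Option} — is that here the two radii coincide, so the strict inequality $r_1>r_2$ demanded there is unavailable. However Lemmas \ref{Equalityimpliespoint2}, \ref{Preserve}, and \ref{Pointimpliesequality} all admit equal radii, and in the equal-radius case the degenerate option $d=r_1-r_2=0$ in Lemma \ref{Pointimpliesequality} is automatically excluded by that lemma's own positivity clause. So I do not expect a genuine obstacle; the proof is a direct assembly of the earlier lemmas, with the one substantive ingredient being the invocation of bijectivity (via Lemma \ref{bijective}) to transfer the cardinality statement back through $f$ in the converse direction.
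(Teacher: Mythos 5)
Your proof is correct and follows essentially the same route as the paper: the paper invokes the Sphere Intersections Theorem--(2) (which packages Lemmas \ref{Pointimpliesequality} and \ref{Equalityimpliespoint2}) together with Lemma \ref{Preserve} to pass from $|S^{x}_{r}\cap S^{y}_{r}|=1$ to $|S^{f(x)}_{r}\cap S^{f(y)}_{r}|=1$ and back. The only cosmetic differences are that you cite the constituent lemmas directly and handle the converse by pulling the cardinality statement back through the injective map $f$, where the paper simply repeats the argument with $f^{-1}$.
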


\begin{proof}
Assume $d(x_1,x_2)=2r$. By the Sphere Intersections Theorem-(2), $|S^{x_1}_{r} \cap S^{x_2}_{r}|=1$.  Therefore $|f(S^{x_1}_{r} \cap S^{x_2}_{r})|=1$. By Lemma \ref{Preserve}, $|S^{f(x_1)}_r \cap S^{f(x_2)}_r|=1$.  By the Sphere Intersection Theorem-(2), $d(f(x_1),f(x_2))=2r$.  Conclude $2r \in P_f$. By Lemma \ref{bijective}, $f$ is bijective; repeating the argument with $f^{-1}$ demonstrates $2r \in SP_f$
\end{proof}

\begin{lem}\label{Many}
Let $f$ be surjective and $r \in (0,\conv(X)) \cap SP_f$.  Let $k$ be the largest integer with the property that for each positive integer $j\leq k$, $jr \in SP_f$, provided a largest such integer exists, and let $k=\infty$ otherwise.  Then $kr\geq \conv(X)$.
\end{lem}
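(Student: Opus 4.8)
The plan is a proof by contradiction. If no largest such integer exists then $k=\infty$ and the claimed inequality is trivial, so I will assume $k$ is finite; by maximality this means $jr\in SP_f$ for every $j\in\{1,\dots,k\}$ while $(k+1)r\notin SP_f$, and I will contradict the maximality of $k$ by deriving $(k+1)r\in SP_f$ from the (to-be-contradicted) hypothesis $kr<\conv(X)$. The arithmetic input I will rely on is that $r\le kr<\conv(X)$ together with Lemma \ref{convexity} gives $(k+1)r=kr+r<2\conv(X)\le\inj(X)$; this is exactly what is needed to run the rigidity lemmas of Section \ref{Intersection} with the radius pair $(kr,r)$, and it also guarantees $d^{-1}((k+1)r)\ne\emptyset$ (points at distance $(k+1)r$ occur as endpoints of any unit speed geodesic segment of that length, which is minimizing).

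When $k=1$ the desired conclusion $(k+1)r=2r\in SP_f$ is immediate from Lemma \ref{Twice}, since $r=kr<\conv(X)$ and $r\in SP_f$. So the substantive case is $k\ge2$, where $kr>r$ and $(k-1)r\in SP_f$. First I would establish $(k+1)r\in P_f$: given $x_1,x_2$ with $d(x_1,x_2)=(k+1)r$, I would apply Lemma \ref{Option} with $r_1=kr$ and $r_2=r$ — its hypotheses hold because $kr>r$, both radii lie in $SP_f$, $kr+r<\inj(X)$, and $d(x_1,x_2)=kr+r$ — to conclude $d(f(x_1),f(x_2))\in\{(k-1)r,(k+1)r\}$. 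The value $(k-1)r$ is excluded since, $(k-1)r$ being in $SP_f$, the equality $d(f(x_1),f(x_2))=(k-1)r$ would force $d(x_1,x_2)=(k-1)r$, contradicting $d(x_1,x_2)=(k+1)r$ (here $r>0$). Hence $d(f(x_1),f(x_2))=(k+1)r$.

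To upgrade $P_f$ to $SP_f$, I would use that $f$ is bijective (injective by Lemma \ref{bijective}, surjective by hypothesis), so $f^{-1}$ is again surjective with $SP_{f^{-1}}=SP_f$ and therefore $jr\in SP_{f^{-1}}$ for all $j\in\{1,\dots,k\}$. Running the previous paragraph verbatim for $f^{-1}$ shows that $d(y_1,y_2)=(k+1)r$ implies $d(f^{-1}(y_1),f^{-1}(y_2))=(k+1)r$, and substituting $y_i=f(x_i)$ gives the reverse implication $d(f(x_1),f(x_2))=(k+1)r\implies d(x_1,x_2)=(k+1)r$. Together with the previous paragraph this yields $(k+1)r\in SP_f$, contradicting the maximality of $k$; hence $kr\ge\conv(X)$.

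I do not anticipate a real obstacle. The two points that need care are the numerical verification $(k+1)r<\inj(X)$, without which Lemmas \ref{Equalityimpliespoint1}, \ref{Equalityimpliespoint2} and \ref{Pointimpliesequality} (and hence Lemma \ref{Option}) are unavailable, and the separate treatment of the base case $k=1$, which is necessary because Lemma \ref{Option} requires the strict inequality $r_1>r_2$ and is therefore replaced there by Lemma \ref{Twice}.
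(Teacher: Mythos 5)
Your proof is correct and follows essentially the same route as the paper: note $k\ge 2$ via Lemma \ref{Twice}, assume $kr<\conv(X)$, apply Lemma \ref{Option} with $r_1=kr$, $r_2=r$ to both $f$ and $f^{-1}$, and use $(k-1)r\in SP_f$ to force $(k+1)r\in SP_f$, contradicting maximality. Your version merely spells out the numerical check $(k+1)r<2\conv(X)\le\inj(X)$ and the elimination of the $(k-1)r$ alternative in more detail than the paper does.
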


\begin{proof}
Note that by Lemma \ref{Twice}, $k \geq 2$.  We argue by contradiction.  Without loss of generality, $k< \infty$.  If $kr<\conv(X)$, then applying Lemma \ref{Option} to $f$ and $f^{-1}$ with $r_1=kr$ and $r_2=r$ implies that a pair of points $x_1,x_2 \in X$ satisfies $d(x_1,x_2) \in \{(k-1)r,(k+1)r\}$ if and only if $d(f(x_1), f(x_2))\in \{(k-1)r,(k+1)r\}$.  By the definition of $k$, $(k-1)r \in SP_f$.  It then follows $(k+1)r \in SP_f$, the desired contradiction.
\end{proof}

\begin{lem}\label{small}
Let $a,b \in X$ and $r \in (0,\conv(X))$.    \begin{enumerate} \item If $d(a,b)<r$, then $S^a_r \cap S^b_r\neq \emptyset$ and $S^a_{2r}\cap S^b_r=\emptyset$.  
\item If $S^a_r \cap S^b_r\neq \emptyset$, $S^a_{2r}\cap S^b_r=\emptyset$, and $r \in (0,\frac{2}{3}\conv(X))$, then $d(a,b)<r$. 
\end{enumerate}
\end{lem}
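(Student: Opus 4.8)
The plan is to prove the two statements separately. Statement (1) should be immediate: assuming $d(a,b)<r$, the inequalities $0=|r-r|\le d(a,b)\le 2r=r+r$ together with $r,r\in(0,\conv(X))$ let Lemma \ref{Inequalitiesimpliesintersect}-(1) produce a point of $S^a_r\cap S^b_r$, while if some $z$ lay in $S^a_{2r}\cap S^b_r$, then Lemma \ref{Intersectimpliesinequalities} would give $r=|2r-r|\le d(a,b)$, contradicting $d(a,b)<r$. Statement (2) is the substantive half, and the idea there is to rerun the path connectedness argument from the proof of Lemma \ref{Inequalitiesimpliesintersect}-(2) for the pair of spheres $S^a_{2r}$ and $S^b_r$.

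For statement (2), I would suppose toward a contradiction that $d(a,b)\ge r$. Applying Lemma \ref{Intersectimpliesinequalities} to the nonempty intersection $S^a_r\cap S^b_r$ gives $d(a,b)\le 2r$, so $r\le d(a,b)\le 2r$, and it then suffices to exhibit a point of $S^a_{2r}\cap S^b_r$. Let $\gamma:\mathbb{R}\to X$ be the arclength parameterized extension of a minimizing geodesic with $a=\gamma(0)$ and $b=\gamma(d(a,b))$, and put $p=\gamma(d(a,b)-r)$ and $q=\gamma(d(a,b)+r)$. Since $r<\conv(X)\le\frac{1}{2}\inj(X)\le\frac{1}{2}\inj(b)$ by Lemma \ref{convexity}, the restrictions of $\gamma$ to the two subintervals $[d(a,b)-r,d(a,b)]$ and $[d(a,b),d(a,b)+r]$, each of length $r$, are minimizing, so $p,q\in S^b_r$. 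Since $r<\frac{2}{3}\conv(X)$ forces $d(a,b)+r\le 3r<2\conv(X)\le\inj(X)\le\inj(a)$, while $0\le d(a,b)-r<\inj(a)$ is clear, the restrictions of $\gamma$ to $[0,d(a,b)-r]$ and $[0,d(a,b)+r]$ are minimizing as well, whence $d(a,p)=d(a,b)-r\le r<2r$ and $d(a,q)=d(a,b)+r\ge 2r$.

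To finish, I would split into two cases. If $d(a,b)=r$, then $d(a,q)=2r$, so $q\in S^a_{2r}\cap S^b_r$, contradicting the hypothesis that this intersection is empty. If instead $d(a,b)>r$, then $d(a,p)<2r<d(a,q)$, and since $\dim(X)\ge 2$ and $r<\inj(b)$ the sphere $S^b_r$ is path connected; choosing a path $\phi$ in $S^b_r$ from $p$ to $q$ and applying the intermediate value theorem to the continuous function $s\mapsto d(a,\phi(s))$ then produces $z\in S^b_r$ with $d(a,z)=2r$, again a contradiction. Hence $d(a,b)<r$. The main obstacle is the one just flagged: because $2r$ can exceed $\conv(X)$ (it is bounded only by $\frac{4}{3}\conv(X)$), neither hypothesis of Lemma \ref{Inequalitiesimpliesintersect} applies to the pair $S^a_{2r},S^b_r$, so its proof has to be reconstructed; this is feasible only because the bound $d(a,b)\le 2r$ coming from $S^a_r\cap S^b_r\neq\emptyset$ improves on the generic bound $d(a,b)\le 3r$, so that one needs only the injectivity estimate $3r<\inj(X)$ --- which is exactly what $r<\frac{2}{3}\conv(X)$ provides --- in place of the $4r\le\inj(X)$ that a direct appeal to Lemma \ref{Inequalitiesimpliesintersect}-(2) would require.
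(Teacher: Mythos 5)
Your proof is correct, and for part (1) it is identical to the paper's (Lemma \ref{Inequalitiesimpliesintersect} for nonemptiness, the contrapositive of Lemma \ref{Intersectimpliesinequalities} for emptiness). For part (2) your overall strategy is also the paper's: extract $d(a,b)\leq 2r$ from $S^a_r\cap S^b_r\neq\emptyset$, then show that $r\leq d(a,b)\leq 2r$ would force $S^a_{2r}\cap S^b_r\neq\emptyset$. The difference is in how that last implication is justified. The paper simply notes $3r<\inj(X)$ and cites Lemma \ref{Inequalitiesimpliesintersect} for the pair $(r_1,r_2)=(2r,r)$, whereas you correctly observe that neither of that lemma's stated hypotheses literally applies: hypothesis (1) fails because $2r$ may exceed $\conv(X)$, and hypothesis (2) demands $r_1+2r_2=4r\leq\inj(a)$, which does not follow from $r<\frac{2}{3}\conv(X)$. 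Your fix --- rerunning the two-endpoints-plus-path-connectedness argument from the proof of Lemma \ref{Inequalitiesimpliesintersect}, using the a priori bound $d(a,b)\leq 2r$ so that only $T_+=d(a,b)+r\leq 3r<\inj(a)$ is needed --- is exactly the observation that makes the paper's citation legitimate (the lemma's proof only ever uses $d(x_1,x_2)+r_2\leq\inj(x_1)$, not the cruder $r_1+2r_2\leq\inj(x_1)$). So your version is slightly longer but more self-contained and arguably more rigorous than the published one; the only stylistic alternative would be to state and invoke that sharpened form of Lemma \ref{Inequalitiesimpliesintersect} rather than inlining its proof. All the individual steps (minimality of the length-$r$ subsegments issuing from $b$ since $r<\inj(b)$, minimality of $\gamma|_{[0,d(a,b)\pm r]}$ since $3r<\inj(a)$, path-connectedness of $S^b_r$ from the standing assumption $\dim(X)\geq 2$, and the intermediate value argument) check out.
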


\begin{proof}
If $d(a,b)<r$, then since $r<\conv(X)$, Lemma \ref{Inequalitiesimpliesintersect} implies $S^{a}_r \cap S^{b}_r\neq \emptyset$. By Lemma \ref{Intersectimpliesinequalities},
 $S^{a}_{2r} \cap S^{b}_r=\emptyset$.
 
Next assume that $r<\frac{2}{3}\conv(X)$, $S^{a}_r \cap S^{b}_r\neq \emptyset$, and $S^{a}_{2r} \cap S^{b}_r=\emptyset$.  Since $S^{a}_r \cap S^{b}_r\neq \emptyset$, Lemma \ref{Intersectimpliesinequalities} implies $d(a,b)\leq 2r$.   By Lemma \ref{convexity}, $3r<\inj(X)$.  Therefore, since $S^{a}_{2r}\cap S^{b}_{r}=\emptyset$, Lemma \ref{Inequalitiesimpliesintersect} implies $d(a,b)>3r$ or $d(a,b)<r$.  Therefore $d(a,b)<r$.   
\end{proof}

\begin{rem}
The hypothesis in Lemma \ref{small}-(2) is likely not optimal.  If $X$ is the unit two sphere, then this statement is valid for $r\leq \frac{4}{5}\conv(X)$.
\end{rem}

\begin{lem}\label{Small}
If either 
\begin{enumerate}
\item $f$ is surjective and $r \in (0,\frac{2}{3}\conv(X))\cap SP_f$, or
\item $f$ is continuous and $r \in (0,\conv(X))\cap SP_f$,
\end{enumerate} then $d(x_1,x_2)<r$ if and only if $d(f(x_1),f(x_2))<r$.  In particular, \begin{enumerate}

\item For each $x \in X$, $f(D^x_r)=D^{f(x)}_r$, and

\item If $Y \subset X$ satifies $\diam(Y)=r$, then $\diam(Y)=\diam(f(Y))$

\end{enumerate}
\end{lem}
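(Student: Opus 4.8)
The plan is to prove the main biconditional first and then derive the two consequences. For case (1), suppose $f$ is surjective and $r\in(0,\tfrac23\conv(X))\cap SP_f$. The key is to characterize the condition $d(x_1,x_2)<r$ purely in terms of sphere intersections, using Lemma \ref{small}: namely, $d(x_1,x_2)<r$ if and only if $S^{x_1}_r\cap S^{x_2}_r\neq\emptyset$ and $S^{x_1}_{2r}\cap S^{x_2}_r=\emptyset$. Since $r\in SP_f$, Lemma \ref{Twice} gives $2r\in SP_f$, so both $r$ and $2r$ are strongly preserved distances. By Lemma \ref{bijective}, $f$ is a bijection, and by Lemma \ref{Preserve} applied with the pairs $(r,r)$ and $(2r,r)$, we get $f(S^{x_1}_r\cap S^{x_2}_r)=S^{f(x_1)}_r\cap S^{f(x_2)}_r$ and $f(S^{x_1}_{2r}\cap S^{x_2}_r)=S^{f(x_1)}_{2r}\cap S^{f(x_2)}_r$. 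Since $f$ is a bijection, a set is empty if and only if its image is. Therefore the sphere-intersection condition holds for $(x_1,x_2)$ precisely when it holds for $(f(x_1),f(x_2))$, and applying Lemma \ref{small} in both directions (noting $r<\tfrac23\conv(X)$ is exactly the hypothesis needed for the converse direction of that lemma) yields $d(x_1,x_2)<r\iff d(f(x_1),f(x_2))<r$.

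For case (2), $f$ is continuous and $r\in(0,\conv(X))\cap SP_f$. By Lemma \ref{continuous} together with Lemma \ref{bijective}, $f$ is a bijection, and since $f$ is a continuous bijection of a manifold to itself, invariance of domain makes $f$ a homeomorphism; so $f^{-1}$ is also continuous. The forward implication $d(x_1,x_2)<r\implies d(f(x_1),f(x_2))<r$ needs care since we no longer have the $\tfrac23$ bound. I would argue: given $d(x_1,x_2)<r$, join $f(x_1)$ to $f(x_2)$ and pull back — more directly, use that $f$ restricted to the closed ball behaves well. Actually the cleaner route: apply the already-proven sphere-intersection equivalence from case (1)'s argument but only using the $r\in SP_f$ and $2r\in SP_f$ facts (which hold here too by Lemma \ref{Twice}) to get that $d(x_1,x_2)<r$ implies (via Lemma \ref{small}-(1)) the sphere conditions, which transfer to $f(x_1),f(x_2)$, giving $S^{f(x_1)}_r\cap S^{f(x_2)}_r\neq\emptyset$, hence by Lemma \ref{Intersectimpliesinequalities} $d(f(x_1),f(x_2))\le 2r$; this alone is not enough. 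Instead, I would use continuity directly: the function $x\mapsto d(f(x_1),f(x))$ restricted to a path from $x_1$ to $x_2$ inside $B^{x_1}_r$, combined with the fact that $f$ maps $S^{x_1}_s\to S^{f(x_1)}_s$ for all $s\in SP_f$ and $SP_f$ near $r$ can be enriched. The honest approach for (2) is to replace the $\tfrac23$ obstruction by a homeomorphism-and-degree argument: $f$ maps $\overline{B^{x_1}_r}$ homeomorphically, and since $f(S^{x_1}_r)=S^{f(x_1)}_r$ (boundary to boundary), $f$ maps the ball $B^{x_1}_r$ onto $B^{f(x_1)}_r$; thus $d(x_1,x_2)<r$ gives $x_2\in B^{x_1}_r$, so $f(x_2)\in B^{f(x_1)}_r$, i.e. $d(f(x_1),f(x_2))<r$. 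Applying the same to $f^{-1}$ gives the converse.

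For the two \enquote{In particular} consequences: consequence (1), $f(D^x_r)=D^{f(x)}_r$, follows immediately from the biconditional applied to $r$ and, since $r\in SP_f$, the equality $f(S^x_r)=S^{f(x)}_r$ (Lemma \ref{Preserve} with $x_1=x_2=x$, or directly from $r\in SP_f$). Indeed $D^x_r=B^x_r\cup S^x_r$, $f(B^x_r)=B^{f(x)}_r$ by the strict-distance biconditional (using bijectivity of $f$), and $f(S^x_r)=S^{f(x)}_r$ since $r\in SP_f$ and $f$ is bijective; the union of the images is $D^{f(x)}_r$. Consequence (2): if $\diam(Y)=r$ then for all $y_1,y_2\in Y$, $d(y_1,y_2)\le r$, i.e.\ $d(y_1,y_2)<r$ or $d(y_1,y_2)=r$; the biconditional handles the strict case and $r\in SP_f$ handles the equality case, so $d(f(y_1),f(y_2))\le r$ for all such pairs, giving $\diam(f(Y))\le r$; conversely, since $\diam(Y)=r$ is attained (or approached), either some pair realizes distance exactly $r$ — whose image realizes $r$ by $r\in SP_f$ — or distances approach $r$ from below while staying below, whose images do likewise by the biconditional, so $\diam(f(Y))\ge r$; hence equality. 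I expect the main obstacle to be case (2)'s forward implication without the $\tfrac23$ hypothesis: one must genuinely invoke invariance of domain to see that the continuous injection $f$ carries the open ball $B^{x_1}_r$ \emph{onto} $B^{f(x_1)}_r$ (not merely into it), which requires knowing $f$ sends the bounding sphere onto the bounding sphere and a connectedness/degree argument to rule out the image missing part of the open ball.
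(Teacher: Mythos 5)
Your proposal is correct and follows essentially the same route as the paper: case (1) is proved exactly as in the paper via Lemmas \ref{Twice}, \ref{Preserve}, and \ref{small}, and your case (2) argument (the continuous injection $f$ carries the connected ball $B^{x_1}_r$ into the component $B^{f(x_1)}_r$ of the complement of $S^{f(x_1)}_r$ because $r\in SP_f$ forces the image to miss that sphere, then apply the same to $f^{-1}$) is the paper's argument in topological clothing --- the paper phrases it with the continuous function $y\mapsto d(f(x_1),f(y))$ on $B^{x_1}_r$, whose image is an interval omitting $r$ and containing $0$. One small caveat on the diameter consequence: your claim that when $\diam(Y)=r$ is only approached, the image distances ``do likewise by the biconditional'' is not actually delivered by the biconditional (which only keeps them below $r$), so the lower bound $\diam(f(Y))\geq r$ genuinely uses attainment of the diameter; this is harmless here because in the lemma's sole application $Y$ is a compact intersection of closed balls, and the paper itself glosses over the point by declaring the consequences immediate.
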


\begin{proof}
Assertions (1) and (2) in the Lemma follow immediately from the main assertion of the Lemma.

We first prove the main assertion assuming hypothesis (1). By Lemma \ref{Twice}, $2r \in SP_f$.  By Lemma \ref{Preserve}, $f(S^{x_1}_{r} \cap S^{x_2}_r)=S^{f(x_1)}_{r} \cap S^{f(x_2)}_r$ and $f(S^{x_1}_{2r} \cap S^{x_2}_r)=S^{f(x_1)}_{2r} \cap S^{f(x_2)}_r$. The main assertion of the Lemma is now a consequence of Lemma \ref{small}. %Assertions (1) and (2) follow immediately.

We conclude with the proof of the main assertion assuming hypothesis (2).  By invariance of domain and Lemmas \ref{bijective} and \ref{continuous}, $f$ is a homeomorphism.  It follows that if $x \in X$, then the function $h:B^x_r \rightarrow \mathbb{R}$ defined by $h(y)=d(f(x),f(y))$ is an interval in $[0,r)\cup(r,\infty)$.  The conclusion follows since $h(x)=0$.
\end{proof}

%\begin{lem}\label{Small2}
%If $\dim(X)\geq 2$, $f$ is continuous, and $r \in (0,\conv(X)) \cap SP_f$, then $d(x_1,x_2)<r$ if and only if $d(f(x_1),f(x_2))<r$.
%\end{lem}

%\begin{proof}

%\end{proof}

\begin{lem}\label{noexpand}
If $r \in (0,\conv(X)) \cap P_f$ and $d(x_1,x_2)\leq 2r$, then $d(f(x_1),f(x_2)) \leq 2r$.
\end{lem}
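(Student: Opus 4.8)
The plan is to realize $f(x_1)$ and $f(x_2)$ as both lying within distance $r$ of a common image point, and then invoke the triangle inequality in $Y=X$. The only input needed beyond the triangle inequality is the existence of a point equidistant from $x_1$ and $x_2$ at distance exactly $r$ from each, and this is precisely what the Sphere Intersections Theorem provides once the convexity hypothesis is in force.

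Concretely, first I would observe that the hypotheses $r\in(0,\conv(X))$ and $d(x_1,x_2)\leq 2r$ give the chain of inequalities $|r-r|=0\leq d(x_1,x_2)\leq r+r$, so that statement (1) of the Sphere Intersections Theorem (equivalently Lemma \ref{Inequalitiesimpliesintersect}-(1) applied with $r_1=r_2=r$) yields a point $z\in S^{x_1}_r\cap S^{x_2}_r$. By definition of these metric spheres, $d(x_1,z)=r$ and $d(x_2,z)=r$.

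Next, since $r\in P_f$ and both pairs $\{x_1,z\}$ and $\{x_2,z\}$ realize the distance $r$, the defining property of $P_f$ forces $d(f(x_1),f(z))=r$ and $d(f(x_2),f(z))=r$. Applying the triangle inequality in $X$ then gives
\[
d(f(x_1),f(x_2))\leq d(f(x_1),f(z))+d(f(z),f(x_2))=r+r=2r,
\]
which is the desired conclusion. Note that this argument uses only $r\in P_f$, not $r\in SP_f$, and requires no surjectivity or continuity hypothesis on $f$.

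There is essentially no serious obstacle: the one place where something could go wrong is the nonemptiness of $S^{x_1}_r\cap S^{x_2}_r$, and this is exactly the subtle implication of the Sphere Intersections Theorem whose validity rests on the convexity hypothesis (compare Example 4). Everything else is a direct application of the triangle inequality and the definition of $P_f$.
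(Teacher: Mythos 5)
Your proposal is correct and follows exactly the paper's argument: apply the Sphere Intersections Theorem (1) with $r_1=r_2=r$ to produce $z\in S^{x_1}_r\cap S^{x_2}_r$, then use $r\in P_f$ and the triangle inequality through $f(z)$. Nothing to add.
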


\begin{proof}
By the Sphere Intersection Theorem-(1), there exists $z \in S^{x_1}_{r}\cap S^{x_2}_r.$ As $r \in P_f$, $d(f(x_1),f(x_2))\leq d(f(x_1),f(z))+d(f(z),f(x_2))=2r$.
\end{proof}

\begin{lem}\label{oy}
If $f$ is surjective, $r_1,r_2 \in (0,\conv(X))\cap SP_f$, and $r_1-r_2 \leq 2r_2<r_1+r_2$, then $r_1-r_2 \in SP_f$
\end{lem}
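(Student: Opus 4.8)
The plan is to recognize the relation $d(x_1,x_2)=r_1-r_2$ among distances by combining two facts already available: first, that the two-element set $\{r_1-r_2,\,r_1+r_2\}$ is jointly detectable via the cardinality of $S^{x_1}_{r_1}\cap S^{x_2}_{r_2}$ (this is exactly Lemma \ref{Option}), and second, that the hypothesis $r_1-r_2\leq 2r_2<r_1+r_2$ is precisely what lets us separate the value $r_1-r_2$ from the value $r_1+r_2$ using the non-expansion property at scale $r_2$ (Lemma \ref{noexpand}). The extra arithmetic hypothesis plays no other role.

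First I would record the trivial consequences of the hypotheses. From $2r_2<r_1+r_2$ we get $r_2<r_1$, so $r_1-r_2\in(0,\conv(X))$ and in particular $r_1-r_2>0$, so that ``$r_1-r_2\in SP_f$'' is meaningful. From $r_1,r_2<\conv(X)$ together with $\conv(X)\leq\tfrac12\inj(X)$ (Lemma \ref{convexity}) we get $r_1+r_2<\inj(X)$. Finally, by Lemma \ref{bijective}, $f$ is bijective, and since $r_1,r_2\in SP_f$ the inverse $f^{-1}$ is surjective and again satisfies $r_1,r_2\in(0,\conv(X))\cap SP_{f^{-1}}$ with the same arithmetic relations; this makes the whole argument symmetric under replacing $f$ by $f^{-1}$.

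Next I would prove the implication $d(x_1,x_2)=r_1-r_2\implies d(f(x_1),f(x_2))=r_1-r_2$ for arbitrary $x_1,x_2\in X$. Since $d(x_1,x_2)=r_1-r_2\in\{r_1-r_2,\,r_1+r_2\}$, $r_1>r_2$, and $r_1+r_2<\inj(X)$, Lemma \ref{Option} gives $d(f(x_1),f(x_2))\in\{r_1-r_2,\,r_1+r_2\}$. On the other hand $d(x_1,x_2)=r_1-r_2\leq 2r_2$, so Lemma \ref{noexpand} applied with $r=r_2\in(0,\conv(X))\cap P_f$ yields $d(f(x_1),f(x_2))\leq 2r_2<r_1+r_2$. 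Combining, $d(f(x_1),f(x_2))=r_1-r_2$.

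Finally, applying this same implication to $f^{-1}$ in place of $f$ (legitimate by the second paragraph) and substituting $x_i\mapsto f(x_i)$ produces the reverse implication $d(f(x_1),f(x_2))=r_1-r_2\implies d(x_1,x_2)=r_1-r_2$. The two implications together are exactly the assertion $r_1-r_2\in SP_f$. I do not expect a genuine obstacle here; the only points requiring care are the verification of $r_1+r_2<\inj(X)$ needed to invoke Lemma \ref{Option}, and the routine bookkeeping confirming that $f^{-1}$ inherits all the hypotheses so that the converse direction follows by symmetry rather than a separate argument.
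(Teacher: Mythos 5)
Your proposal is correct and follows essentially the same route as the paper's proof: apply Lemma \ref{Option} to restrict $d(f(x_1),f(x_2))$ to $\{r_1-r_2,\,r_1+r_2\}$, rule out $r_1+r_2$ via Lemma \ref{noexpand} at scale $r_2$ using $r_1-r_2\leq 2r_2<r_1+r_2$, and then obtain the converse implication by repeating the argument with $f^{-1}$. Your explicit verification that $r_1+r_2<\inj(X)$ (via Lemma \ref{convexity}) and that $r_2<r_1$ is careful bookkeeping the paper leaves implicit.
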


\begin{proof}
Assume that $d(a,b)=r_1-r_2$.  By Lemma \ref{Option}, $d(f(a),f(b))=r_1-r_2$ or $d(f(a),f(b))=r_1+r_2$.  As $r_1-r_2\leq 2r_2$, Lemma \ref{noexpand} implies $d(f(a),f(b))\leq 2r_2<r_1+r_2$, whence $d(f(a),f(b))=r_1-r_2$.  By Lemma \ref{bijective}, $f$ is bijective; repeating the argument with $f^{-1}$ demonstrates $r_1-r_2 \in SP_f$.
\end{proof}

Given $x \in \mathbb{R}$, let $\lfloor x \rfloor \in \mathbb{Z}$ denote the largest integer less than or equal to $x$.

\begin{prop}\label{Difference}
If $f$ is surjective, $r_1,r_2 \in (0,\conv(X))\cap SP_f$, and $r_1>r_2$, then $r_1-\lfloor r_1/r_2 \rfloor r_2 \in SP_f\cup \{0\}$. \end{prop}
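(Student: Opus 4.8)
The plan is to induct on $\lfloor r_1/r_2 \rfloor$ using Lemma \ref{oy} as the engine, handling a small technical wrinkle with Lemma \ref{Twice}. Write $k = \lfloor r_1/r_2 \rfloor$, so that $k \geq 1$ and $kr_2 \leq r_1 < (k+1)r_2$. The base case $k=1$ is immediate: then $r_1 - \lfloor r_1/r_2 \rfloor r_2 = r_1 - r_2$, and since $r_2 < r_1 < 2r_2$ we have $r_1 - r_2 < r_2 = 2r_2 - r_2 < 2r_2$ as well as $2r_2 \geq r_1 > r_1 - r_2$, so the hypothesis $r_1 - r_2 \leq 2r_2 < r_1 + r_2$ of Lemma \ref{oy} holds and we conclude $r_1 - r_2 \in SP_f$ (if $r_1 = r_2$ the quantity is $0$, but $r_1 > r_2$ is assumed; if $r_1-r_2 = 0$ we are done trivially). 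Actually one must also cover the possibility $r_1 = kr_2$ exactly, in which case $r_1 - kr_2 = 0 \in SP_f \cup \{0\}$ and there is nothing to prove; so assume henceforth $r_1 > kr_2$, i.e. $r_1 - kr_2 \in (0, r_2)$.

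For the inductive step with $k \geq 2$, I would first produce a smaller preserved distance and then reduce the value of the floor. The natural candidate is to apply Lemma \ref{oy} with the pair $(r_1, r_2)$ directly when possible; but in general $2r_2 \geq r_1$ fails for $k \geq 2$, so Lemma \ref{oy} does not apply to this pair. Instead, set $\rho = r_1 - r_2$. I claim $\rho \in (0,\conv(X)) \cap SP_f$: it is positive since $r_1 > r_2$, it is less than $\conv(X)$ since $0 < r_1 - r_2 < r_1 < \conv(X)$, and it lies in $SP_f$ by Lemma \ref{oy} \emph{provided} $r_1 - r_2 \leq 2r_2 < r_1 + r_2$. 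The right inequality $2r_2 < r_1 + r_2$ is just $r_2 < r_1$, true. The left inequality $r_1 - r_2 \leq 2r_2$, i.e. $r_1 \leq 3r_2$, holds when $k \leq 3$ but not in general. So for $k \in \{2,3\}$ we immediately get $r_1 - r_2 \in SP_f$, and then $\lfloor (r_1-r_2)/r_2 \rfloor = k-1$ with $(r_1-r_2) - (k-1)r_2 = r_1 - kr_2$, so the induction hypothesis applied to the pair $(r_1 - r_2, r_2)$ finishes the case.

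To handle arbitrary $k$, I would repeatedly subtract $r_2$: the real content is that once we know $jr_2 \in SP_f$ for $2 \leq j \leq k$ — which follows from iterating Lemma \ref{Twice} and Lemma \ref{oy}, or more cleanly by noting that Lemma \ref{oy} applied to $((j+1)r_2 \text{-type quantities})$... — hmm, cleaner: since $r_2 \in (0,\conv(X)) \cap SP_f$, Lemma \ref{Many} gives that $jr_2 \in SP_f$ for all $j$ with $jr_2 < \conv(X)$; in particular $kr_2 \in SP_f$ because $kr_2 \leq r_1 < \conv(X)$. Now apply Lemma \ref{oy} to the pair $(r_1, kr_2)$: we need $r_1 - kr_2 \leq 2kr_2 < r_1 + kr_2$. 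The right inequality is $kr_2 < r_1$... which may fail if $r_1 = kr_2$, already excluded, so $kr_2 < r_1$ holds. Wait — we need $2kr_2 < r_1 + kr_2$, i.e. $kr_2 < r_1$: true. The left inequality $r_1 - kr_2 \leq 2kr_2$, i.e. $r_1 \leq 3kr_2$: since $r_1 < (k+1)r_2 \leq 3kr_2$ for $k \geq 1$, this holds. Also $kr_2 \in (0,\conv(X))$ and $r_1 \in (0,\conv(X))$ and $r_1 > kr_2$, so all hypotheses of Lemma \ref{oy} are met with $r_1$ playing the role of ``$r_1$'' and $kr_2$ the role of ``$r_2$''. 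Lemma \ref{oy} then yields $r_1 - kr_2 \in SP_f$, which is exactly the claim. The one subtlety to check carefully — and the step I expect to be the main obstacle — is verifying the numerical hypothesis $r_1 - kr_2 \leq 2kr_2$ of Lemma \ref{oy}, which amounts to the clean inequality $r_1 \leq 3kr_2$; this is where the definition of the floor is used (via $r_1 < (k+1)r_2$), and it makes the single application of Lemma \ref{oy} suffice, so no induction is actually needed.
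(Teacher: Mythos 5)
Your argument is correct and takes essentially the same route as the paper: both proofs use Lemma \ref{Many} to place $\lfloor r_1/r_2\rfloor r_2$ in $SP_f$ and then exploit the dichotomy $d(f(a),f(b))\in\{r_1-\lfloor r_1/r_2\rfloor r_2,\ r_1+\lfloor r_1/r_2\rfloor r_2\}$ coming from Lemma \ref{Option}, ruling out the larger value. The only difference is cosmetic: the paper excludes the larger value via Lemma \ref{Small} (which gives $d(f(a),f(b))<r_2$), whereas you package the exclusion through a single application of Lemma \ref{oy} to the pair $(r_1,\lfloor r_1/r_2\rfloor r_2)$, correctly verifying its numerical hypothesis from $r_1<(\lfloor r_1/r_2\rfloor+1)r_2\le 3\lfloor r_1/r_2\rfloor r_2$, which also lets you treat $\lfloor r_1/r_2\rfloor=1$ and $\lfloor r_1/r_2\rfloor\ge 2$ uniformly.
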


\begin{proof}
Note that $\lfloor r_1/r_2 \rfloor r_2\leq r_1<(1+\lfloor r_1/r_2 \rfloor)r_2.$ The conclusion holds trivially when the first inequality is an equality.  Now consider the case when $\lfloor r_1/r_2 \rfloor r_2< r_1<(1+\lfloor r_1/r_2 \rfloor)r_2.$  If $\lfloor r_1/r_2 \rfloor=1$, then $r_1-r_2<r_1<2r_2$, and by Lemma \ref{oy}, $r_1-r_2 \in SP_f.$

Now assume that $\lfloor r_1/r_2 \rfloor \geq 2.$ Then $$r_1-\lfloor r_1/r_2 \rfloor r_2<r_2\leq r_1/2<\conv(X)/2.$$  If $d(a,b)=r_1-\lfloor r_1/r_2 \rfloor r_2$, then applying Lemma \ref{Small} with $r=r_2$ implies $$d(f(a), f(b))<r_2.$$  As $\lfloor r_1/r_2\rfloor r_2< r_1<\conv(X)$, Lemma \ref{Many} implies $\lfloor r_1/r_2\rfloor r_2 \in SP_f$.  It then follows from Lemma \ref{Option}, applied to the radii $r_1$ and $\lfloor r_1/r_2 \rfloor r_2$, that $d(f(a),f(b))= r_1-\lfloor r_1/r_2 \rfloor r_2$. By Lemma \ref{bijective}, $f$ is bijective; repeating the argument with $f^{-1}$ demonstrates  $r_1-\lfloor r_1/r_2 \rfloor r_2 \in SP_f.$
\end{proof}

\section{\bf Theorems A-C}\label{Main}

Theorem A is based on the following lemma.

\begin{lem}\label{LimitPoint}
Let $S$ be a subset of $(0, \infty)$ satisfying:
\begin{enumerate}
\item If $a,b \in S$ and $a>b$, then $a-\lfloor a/b \rfloor b \in S \cup \{0\}$.
\item There exist $a,b \in S$ with $a/b$ irrational.
\end{enumerate}
Then $0$ is a limit point of $S$.
\end{lem}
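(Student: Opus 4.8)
The plan is to recognize the operation $a\mapsto a-\lfloor a/b\rfloor b$ as the remainder of $a$ upon division by $b$ and to run the subtractive Euclidean algorithm starting from the pair witnessing condition (2). After relabeling, fix $a,b\in S$ with $a>b$ and $a/b$ irrational — the ratio being irrational forces $a\neq b$ — and set $a_1=a$, $a_2=b$. I would then construct an infinite strictly decreasing sequence $a_1>a_2>a_3>\cdots$ of elements of $S$, carrying along the invariant that $a_n/a_{n+1}$ is irrational for every $n$. Inductively, given a pair $a_n>a_{n+1}>0$ in $S$ with $a_n/a_{n+1}$ irrational, condition (1) applied to $(a_n,a_{n+1})$ places $a_{n+2}:=a_n-\lfloor a_n/a_{n+1}\rfloor a_{n+1}$ in $S\cup\{0\}$. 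But $a_{n+2}/a_{n+1}=a_n/a_{n+1}-\lfloor a_n/a_{n+1}\rfloor$ is irrational, hence nonzero, so $a_{n+2}\neq 0$ and therefore $a_{n+2}\in S$; from the definition of the floor one also gets $0\le a_{n+2}<a_{n+1}$, so in fact $0<a_{n+2}<a_{n+1}$; and $a_{n+1}/a_{n+2}$ is irrational as the reciprocal of a nonzero irrational. This reproduces the inductive hypothesis for the pair $(a_{n+1},a_{n+2})$, so the construction never terminates.

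Next I would extract the limit. The sequence $(a_n)$ is strictly decreasing and bounded below by $0$, so it converges to some $L\ge 0$, and I claim $L=0$. Since $a_n>a_{n+1}$ we have $\lfloor a_n/a_{n+1}\rfloor\ge 1$, hence $a_n=\lfloor a_n/a_{n+1}\rfloor a_{n+1}+a_{n+2}\ge a_{n+1}+a_{n+2}$, that is, $a_{n+2}\le a_n-a_{n+1}$ for every $n$. Letting $n\to\infty$ yields $L\le L-L=0$, so $L=0$. Since $(a_n)$ is a sequence of elements of $S$ converging to $0$, the point $0$ is a limit point of $S$, as claimed.

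Every step here is elementary, so there is no serious obstacle; the two points worth watching are that the algorithm does not terminate — which is precisely what the propagation of the irrationality invariant secures, guaranteeing each remainder $a_{n+2}$ is a genuine nonzero remainder lying in $S$ rather than in $\{0\}$ — and that the limit of the decreasing sequence is $0$ rather than some positive value, which uses the sharper estimate $a_{n+2}\le a_n-a_{n+1}$ and not merely $a_{n+2}<a_{n+1}$.
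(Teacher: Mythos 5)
Your proof is correct and follows essentially the same route as the paper's: both run the subtractive Euclidean algorithm on the irrational pair, use irrationality of the successive ratios to guarantee each remainder is a nonzero element of $S$, and exploit the inequality $a_{n+2}\le a_n-a_{n+1}$ (the paper phrases this via the Cauchy property of the decreasing sequence, you via passing to the limit $L\le L-L$) to force the terms to zero. No substantive difference.
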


\begin{proof}
Let $\epsilon>0$. We will show $S \cap(0,\epsilon)\neq \emptyset$.  To this end, consider a strictly decreasing sequence $\{s_i\}$ in $S$ constructed as follows:  Let $a, b \in S$ be as in (2) with $a>b$.  Set $s_1=a$, $s_2=b$.  Define $s_3=s_1-\lfloor s_1/s_2 \rfloor s_2.$  Verify $s_2>s_3>0$ and $s_2/s_3$ is irrational.  Defining $s_i=s_{i-2}-\lfloor s_{i-2}/s_{i-1} \rfloor s_{i-1}$ iteratively produces the desired sequence.  As $S$ is bounded below, the strictly decreasing sequence $\{s_i\}$ is Cauchy.  Therefore, for $n$ sufficiently large $$s_{n+1}=s_{n-1}-\lfloor s_{n-1}/s_n \rfloor s_n \leq s_{n-1}-s_{n}<\epsilon.$$ 
\end{proof}

\noindent \underline{\textit{Proof of Theorem A.}}\vskip 5pt 
By Lemmas \ref{bijective} and \ref{continuous}, $f$ is a bijection.  Let $S=(0,\conv(X))\cap SP_f$.  The set $S$ satisfies Lemma \ref{LimitPoint}-(1) by Proposition \ref{Difference} and Lemma \ref{LimitPoint}-(2) by hypothesis. Therefore, zero is a limit point of $S$.  The Immersion Theorem implies that $f$ is a Riemannian immersion.    Bijective Riemannian immersions are isometries, concluding the proof. \qed \vskip 5pt

%\noindent{\textit{Proof Two of Theorem A Outline.}} 

%The next Lemma is an immediate consequence of Lemma \ref{Small}.  Same as above, but different approach to showing $r_1-r_2$ is preserved.  Have lemma showing points at distance $r_1-r_2$ map to points at distance $r_1-r_2$ or to points at distance $r_1+r_2$ .  Choose $x,y \in S(p,r_1+r_2)$ with $d(x,y)=r_2$.  Then $d(f(p),f(x))$, $d(f(p),f(y))$ in $\{r_1+r_2,r_1-r_2\}$ and $d(f(x),f(y))=2r_2$.  If $f(x) \in S(f(p),r_1-r_2)$ then $f(y)$    

%\begin{lem}\label{Diameter}
%Let $f$ be a bijection and $Y\subset X$ a compact subset.  If $$\diam(Y) \in (0,\frac{2}{3}\conv(X))\cap SP_f,$$ then $$\diam(Y)=\diam(f(Y)).$$
%\end{lem} 

%\begin{proof}

%\end{proof}

%\vskip 5pt

%\begin{proof}
%Let $a, b \in f(Y)$, $x=f^{-1}(a)$, and $y=f^{-1}(b)$.  If $d(x,y)=\diam(Y)$ then $d(a,b)=\diam

%For each $y_1,y_2 \in Y$, $d(y_1,y_2) \leq \diam(Y)$; equality holds for some pair $(y_1,y_2) \in Y \times Y$ since $Y$ is compact.  Then   

%(\ref{diameter}), then $$d(f(y_1),f(y_2))=\diam(Y)$$ since $\diam(Y) \in SP_f$.  If equality fails in (\ref{diameter}), then $d(f(y_1),f(y_2))<\diam(Y)$ by Lemma \ref{Small}.  Therefore $$\diam(f(Y))\leq \diam(Y).$$ \end{proof}

Theorem $B$ is based on the following specialization of the main theorem in \cite{MaSc}. \vskip 5pt

\noindent{\bf Diameter Theorem:} \textit{If $0<r<\conv(X)$ and if $\gamma:[0,2r]\rightarrow X$ is an arclength parameterized geodesic, then the function $$g(t)=\diam(D^{\gamma(0)}_r \cap D^{\gamma(t)}_r)$$ is continuous, monotonically decreasing, and satisfies $g(t)>2r-t$ for $t \in (0,2r)$.} \vskip 5pt

Given a pair of points $x$ and $y$ in the Euclidean plane and $r>0$, the intersection $D^x_r \cap D^y_r$ has diameter  $r$ if and only if $d(x,y)=\sqrt{3}r$.  The next Corollary is a generalization of this fact for connected two-point homogenous spaces.

\begin{cor}\label{corrr}
If $X$ is a connected two point homogenous space and $0<r<\conv(X)$, then there is a unique $\bar{r} \in (0,2r)$ with the property that for all $x,y \in X$ satisfying $d(x,y)\leq 2r$, $$\diam(D^x_r \cap D^y_r)=r \iff d(x,y)=\bar{r}.$$  Moreover, $\bar{r} \in (r,2r)$.
\end{cor}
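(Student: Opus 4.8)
The plan is to extract the needed function $g(t) = \diam(D^{\gamma(0)}_r \cap D^{\gamma(t)}_r)$ from the Diameter Theorem and use its qualitative behavior together with two-point homogeneity to identify $\bar{r}$. First I would fix, once and for all, an arclength parameterized geodesic $\gamma:[0,2r] \rightarrow X$; by the Diameter Theorem, $g$ is continuous, monotonically decreasing on $[0,2r]$, and satisfies $g(t) > 2r - t$ for $t \in (0,2r)$. I would record the endpoint values: $g(0) = \diam(D^{\gamma(0)}_r) = 2r$ (the diameter of a strongly convex ball of radius $r < \conv(X)$ is $2r$, realized by antipodal points on a geodesic through the center, using $2r < \inj(X)$ from Lemma \ref{convexity}), and $g(2r) = \diam(D^{\gamma(0)}_r \cap D^{\gamma(2r)}_r)$, which by Lemma \ref{Equalityimpliespoint2} is a single point, hence has diameter $0$. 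So $g$ runs continuously from $2r$ down to $0$.

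Next I would produce $\bar{r}$ as a preimage. Since $g$ is continuous with $g(0) = 2r > r > 0 = g(2r)$, the intermediate value theorem gives some $t_0 \in (0, 2r)$ with $g(t_0) = r$; set $\bar{r} = t_0$. For uniqueness of the value $\bar{r}$ I want to know $g$ is \emph{strictly} decreasing near any point where it equals $r$; the bound $g(t) > 2r - t$ alone does not force strict monotonicity everywhere, so the cleaner route is: if $g(t_1) = g(t_2) = r$ with $t_1 < t_2$, then by monotonicity $g \equiv r$ on $[t_1,t_2]$, and I would need to rule this out. Here is where two-point homogeneity enters decisively: the isometry group acts transitively on pairs of points at a fixed distance, so $\diam(D^x_r \cap D^y_r)$ depends only on $d(x,y)$ — call this common value $G(d(x,y))$ for $d(x,y) \in [0,2r]$ — and $g(t) = G(t)$ for the geodesic parameter $t$ since $d(\gamma(0),\gamma(t)) = t$ when $t \le 2r < \inj(X)$. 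Thus the statement "$\diam(D^x_r \cap D^y_r) = r \iff d(x,y) = \bar r$" for all $x,y$ with $d(x,y) \le 2r$ reduces entirely to the behavior of the single function $G = g$ on $[0,2r]$, and uniqueness of $\bar r$ is exactly the statement that $g^{-1}(r)$ is a single point.

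To get that, I would argue $g$ is strictly decreasing on $(0,2r)$ — or at least injective at the level $r$ — by a second application of the Diameter Theorem: for $0 < t_1 < t_2 < 2r$, restrict attention to the diametral pair realizing $\diam(D^{\gamma(0)}_r \cap D^{\gamma(t_2)}_r)$ and compare with the $t_1$ intersection, or more simply invoke that the Diameter Theorem's conclusion $g(t) > 2r - t$ applied on a sub-geodesic rules out constancy; I expect the paper has set things up so that strict monotonicity is immediate from the cited theorem (indeed monotone plus the strict lower bound $g(t) > 2r-t$, combined with $g(2r)=0$, already forces $g$ to be strictly decreasing on any interval abutting $2r$, and one propagates this). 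Finally, the refinement $\bar{r} \in (r, 2r)$: I would show $g(r) > r$. By the strict lower bound, $g(r) > 2r - r = r$, so $g(r) > r = g(\bar r)$, and monotonicity forces $\bar r > r$; together with $\bar r \in (0,2r)$ this gives $\bar r \in (r,2r)$. The main obstacle I anticipate is nailing down the strict monotonicity / injectivity of $g$ at the level $r$ cleanly from the stated Diameter Theorem — everything else (endpoint values, IVT, the lower-bound trick for $\bar r > r$, the reduction via homogeneity) is routine.
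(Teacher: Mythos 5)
Your proposal is correct and follows essentially the same route as the paper: fix a geodesic, invoke the Diameter Theorem for continuity, monotonicity, and $g(r)>2r-r=r$, obtain $g(2r)=0$ from Lemma \ref{Equalityimpliespoint2}, reduce everything to the single function $g$ via two-point homogeneity, and conclude by the intermediate value theorem. The strict-monotonicity issue you flag is not treated any more explicitly in the paper, whose proof simply reads the Diameter Theorem's ``monotonically decreasing'' as strict and asserts that ``the conclusion follows,'' so your concern applies equally to the published argument rather than indicating a gap particular to yours.
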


\begin{proof}
Fix a geodesic as in the Diameter Theorem and let $g:[0,2r]\rightarrow \mathbb{R}$ be the associated diameter function.  As $X$ is two-point homogeneous, it suffices to prove that there is a unique $\bar{r} \in (0,2r)$ such that $g(\bar{r})=r$, and moreover, $\bar{r} \in (r,2r)$.  By the Diameter Theorem, $g(t)$ is continuous, monotonically decreasing, and satisfies $g(r)>2r-r=r$.  By Lemma \ref{Equalityimpliespoint2}, $g(2r)=0$.  The conclusion follows.  
\end{proof}

\begin{lem}\label{bar}
If $X$ is a connected two-point homogenous space, $f:X \rightarrow X$ is a bijection, $r \in (0,\frac{2}{3}\conv(X))\cap SP_f$, and $\bar{r}\in (r,2r)$ is as in Corollary \ref{corrr}, then $\bar{r} \in SP_f$.
\end{lem}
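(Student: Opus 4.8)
The plan is to first prove $\bar r\in P_f$ and then upgrade to $\bar r\in SP_f$ by a symmetry argument. Since $f$ is a bijection and $r\in SP_f$, the inverse $f^{-1}$ is again a bijection of $X$ with $r\in SP_{f^{-1}}$, and $\bar r$ depends only on $X$ and $r$ (not on $f$). Hence, once the implication ``$d(x_1,x_2)=\bar r\Rightarrow d(g(x_1),g(x_2))=\bar r$'' is established for every bijection $g$ satisfying the hypotheses of the lemma, applying it to $g=f$ and to $g=f^{-1}$ yields $\bar r\in SP_f$. So I would reduce to showing: if $d(x_1,x_2)=\bar r$, then $d(f(x_1),f(x_2))=\bar r$.

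Fix $x_1,x_2$ with $d(x_1,x_2)=\bar r$. First I would collect the consequences of $r\in(0,\tfrac23\conv(X))\cap SP_f$: by Lemma~\ref{Small}-(1), $f(D^{x_i}_r)=D^{f(x_i)}_r$ for $i=1,2$, and since $f$ is injective, $f\big(D^{x_1}_r\cap D^{x_2}_r\big)=f(D^{x_1}_r)\cap f(D^{x_2}_r)=D^{f(x_1)}_r\cap D^{f(x_2)}_r$. Since $d(x_1,x_2)=\bar r\le 2r$, Corollary~\ref{corrr} gives $\diam\big(D^{x_1}_r\cap D^{x_2}_r\big)=r$; this set thus has diameter exactly $r$, so Lemma~\ref{Small}-(2) applies to $Y=D^{x_1}_r\cap D^{x_2}_r$ and gives $\diam\big(D^{f(x_1)}_r\cap D^{f(x_2)}_r\big)=\diam\big(f(Y)\big)=\diam(Y)=r$.

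To finish I would reapply Corollary~\ref{corrr}, now to the pair $f(x_1),f(x_2)$; this needs $d(f(x_1),f(x_2))\le 2r$. That bound comes from Lemma~\ref{noexpand} with the preserved distance $r\in(0,\conv(X))\cap P_f$: since $d(x_1,x_2)=\bar r<2r$, it yields $d(f(x_1),f(x_2))\le 2r$. Then Corollary~\ref{corrr} together with the equality $\diam\big(D^{f(x_1)}_r\cap D^{f(x_2)}_r\big)=r$ forces $d(f(x_1),f(x_2))=\bar r$, completing the proof that $\bar r\in P_f$, and hence (by the symmetry step) that $\bar r\in SP_f$.

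I do not expect a genuine obstacle; the argument is an assembly of Corollary~\ref{corrr} with Lemmas~\ref{Small} and~\ref{noexpand}. The points needing care are purely bookkeeping: that the hypothesis $r<\tfrac23\conv(X)$ is exactly what makes Lemma~\ref{Small} available, that Lemma~\ref{noexpand} needs only $r<\conv(X)$ to control $d(f(x_1),f(x_2))$, that $D^{x_1}_r\cap D^{x_2}_r$ is automatically nonempty because its diameter is $r>0$, and that $\bar r\in(r,2r)$ already lies in the range $(0,2r)$ where Corollary~\ref{corrr} is stated, both for the pair $x_1,x_2$ and, after the bound above, for the pair $f(x_1),f(x_2)$.
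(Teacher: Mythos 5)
Your proposal is correct and follows essentially the same route as the paper: transport $\diam(D^{x_1}_r\cap D^{x_2}_r)=r$ through $f$ via Lemma~\ref{Small}, bound $d(f(x_1),f(x_2))\le 2r$, and invoke the uniqueness in Corollary~\ref{corrr}, then repeat with $f^{-1}$. The only (immaterial) difference is that you obtain the bound $d(f(x_1),f(x_2))\le 2r$ from Lemma~\ref{noexpand}, whereas the paper gets it from the triangle inequality through a point of the nonempty set $D^{f(x_1)}_r\cap D^{f(x_2)}_r$.
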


\begin{proof}
By Corollary \ref{corrr}, if $d(x,y)=\bar{r}$, then $\diam(D^x_r \cap D^y_r)=r.$ By Lemma \ref{Small} $\diam(D^{f(x)}_r\cap D^{f(y)}_r)=\diam(f(D^x_r \cap D^{y}_r))=r$.  In particular, $D^{f(x)}_r\cap D^{f(y)}_r$ is nonempty.  If $z \in D^{f(x)}_r\cap D^{f(y)}_r$, then $d(f(x),f(y))\leq d(f(x),z)+d(z,f(y))\leq 2r.$  By Corollary \ref{corrr}, $d(f(x),f(y))=\bar{r}$ and $\bar{r} \in P_f$.  Repeating this argument after replacing $f$ with $f^{-1}$ demonstrates $\bar{r} \in SP_f$.
\end{proof}

\noindent \underline{\textit{Proof of Theorem B.}}\vskip 5pt
By Lemmas \ref{bijective} and \ref{continuous}, $f$ is a bijection.  Define $l_0:=r$ and let $\bar{l}_0=\bar{r} \in (l_0,2l_0)$ be as in Corollary \ref{corrr}.  Define $l_1=\bar{l}_0-l_0$.  Then $0<l_1<l_0$.

By Lemma \ref{convexity}, $l_0+\bar{l}_0<3r<2\conv(X)\leq \inj(X).$ Apply Lemma \ref{oy} with $r_1=\bar{l}_0$ and $r_2=l_0$ to conclude $l_1 \in SP_f$.  

For $i \geq 2$, define $l_i$ inductively by $l_i:=\bar{l}_{i-1}-l_{i-1}$.  Repeating the above argument, the sequence $\{l_i\}$ is strictly decreasing and satisfies $l_i \in SP_f.$  As the sequence $\{l_i\}$ is bounded below by $0$, it is Cauchy.  Therefore, given $\epsilon>0$, for $i$ sufficiently large, $l_i-\lfloor l_i/l_{i+1}\rfloor l_{i+1} <\epsilon$. By Proposition \ref{Difference}, $l_i-\lfloor l_i/l_{i+1}\rfloor l_{i+1}  \in SP_f$.  Therefore $0$ is a limit point of $SP_f$.  By the Immersion Theorem, $f$ is a bijective Riemannian immersion, hence an isometry.  
\qed \vskip 5pt

Theorem C is based on the following well known density lemma. \vskip 5pt

\begin{lem}\label{density}
If $r \in (0, \infty)$ is irrational, then the set $\{nr - \lfloor nr \rfloor\, \vert\, n \in \mathbb{N}\}$ is dense in $[0,1]$.
\end{lem}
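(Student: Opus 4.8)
The plan is a standard pigeonhole (Dirichlet) argument followed by a ``stepping around the circle'' (Kronecker) observation. For brevity write $\{t\}=t-\lfloor t\rfloor$ for the fractional part of $t\in\mathbb{R}$; the assertion to be proved is that $\{\{nr\}\,\vert\,n\in\mathbb{N}\}$ is dense in $[0,1]$. So fix $x\in[0,1]$ and $\epsilon>0$; it suffices to produce $n\in\mathbb{N}$ with $|\{nr\}-x|<\epsilon$.

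First I would choose $N\in\mathbb{N}$ with $1/N<\epsilon$, partition $[0,1)$ into the $N$ subintervals $[(i-1)/N,i/N)$ for $1\le i\le N$, and apply the pigeonhole principle to the $N+1$ numbers $\{0\},\{r\},\{2r\},\dots,\{Nr\}$: two of them, say $\{ar\}$ and $\{br\}$ with $0\le a<b\le N$, lie in a common subinterval. Setting $k=b-a\in\mathbb{N}$ and $m=\lfloor br\rfloor-\lfloor ar\rfloor\in\mathbb{Z}$, the real number $\delta:=kr-m=\{br\}-\{ar\}$ satisfies $|\delta|<1/N<\epsilon$, and $\delta\neq 0$ since $\delta=0$ would force $r=m/k\in\mathbb{Q}$, contrary to hypothesis.

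Next I would note that $nkr-nm=n\delta$ for every $n\in\mathbb{N}$, so $\{nkr\}=\{n\delta\}$. If $\delta>0$, then as $n$ increases the points $\{n\delta\}$ advance around $[0,1)$ in fixed increments of size $\delta<\epsilon$ (wrapping around when a multiple of $1$ is crossed), so consecutive points of $\{\{n\delta\}\,\vert\,n\in\mathbb{N}\}$, listed in their circular order, are at distance at most $\delta<\epsilon$; hence every subinterval of $[0,1)$ of length $\epsilon$ contains some $\{n\delta\}$. If $\delta<0$ the same conclusion holds, the points now advancing in the opposite direction by $|\delta|<\epsilon$. Either way there is $n\in\mathbb{N}$ with $|\{nr\}_{\text{at }nk}-x|=|\{nkr\}-x|=|\{n\delta\}-x|<\epsilon$, and $\{nkr\}=nkr-\lfloor nkr\rfloor$ is of the required form. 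Since $x$ and $\epsilon$ were arbitrary, every point of $[0,1]$ — in particular $0$ — is a limit point of $\{nr-\lfloor nr\rfloor\,\vert\,n\in\mathbb{N}\}$.

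There is no real obstacle here; the argument is classical. The only place warranting a moment's care is the ``stepping around the circle'' claim, which I would make precise by observing that $\{(n+1)\delta\}$ equals $\{n\delta\}+\delta$ or $\{n\delta\}+\delta-1$, so in $\mathbb{R}/\mathbb{Z}$ the forward arc from $\{n\delta\}$ to $\{(n+1)\delta\}$ has length exactly $\delta$; iterating past one full revolution shows the visited points have successive circular gaps bounded by $|\delta|<\epsilon$, which yields the density.
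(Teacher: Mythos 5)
Your argument is correct: the pigeonhole step produces $k$ and $m$ with $0<|kr-m|<\epsilon$, and the stepping-around-the-circle step correctly upgrades this to density, with the one delicate point (that the forward circular arc from $\{n\delta\}$ to $\{(n+1)\delta\}$ has length exactly $|\delta|$, so the visited points are $\epsilon$-dense) handled as you indicate. There is nothing to compare against: the paper states this lemma as ``well known'' and supplies no proof at all, so your write-up in fact fills a gap the authors deliberately left open, using the standard Dirichlet--Kronecker argument one would expect.
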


\noindent \underline{\textit{Proof of Theorem C.}}\vskip 5pt
By Lemmas \ref{bijective} and \ref{continuous}, $f$ is a bijection.  Assume that $r \in (0,\conv(X))\cap SP_f$ is irrational.  Let $\epsilon >0$.  By the Immersion Theorem, it suffices to prove $(0,\epsilon)\cap SP_f\neq \emptyset.$  By Lemma \ref{density}, there exists $n \in \mathbb{N}$ such that $0<nr-\lfloor nr \rfloor<\epsilon$ and $nr-\lfloor nr \rfloor<\inj(X)$.  We claim that $nr-\lfloor nr \rfloor \in SP_f$.

Given $x,y \in X$ with $d(x,y)=nr-\lfloor nr \rfloor$, let $\gamma:\mathbb{R} \rightarrow X$ be the arclength parameterized geodesic with $\gamma(0)=x$ and $\gamma(nr-\lfloor nr \rfloor)=y$.  As $\gamma$ is periodic with period one, $\gamma(nr)=\gamma(nr-\lfloor nr \rfloor)=y$.  

%Consider the set $\{\gamma(ir)\, \vert\, i \in \mathbb{N}\}$.  
Let $\tilde{\gamma}:\mathbb{R} \rightarrow X$ be the arclength parameterized geodesic with $\tilde{\gamma}(0)=f(\gamma(0))$ and $\tilde{\gamma}(r)=f(\gamma(r))$.  We claim that for all $i \in \mathbb{N}$,

\begin{equation}\label{star} f(\gamma(ir))=\tilde{\gamma}(ir). \end{equation}  The case $i=1$ in (\ref{star}) holds by construction; the remaining cases $i>1$ will be established using strong induction.  If (\ref{star}) holds for all $0\leq k<i$, then since $$d(\gamma((i-2)r),\gamma((i-1)r))=r=d(\gamma((i-1)r),\gamma(ir))$$ and $$d(\gamma((i-2)r),\gamma(ir))=2r,$$ 
Lemmas \ref{stricttriangle} and \ref{Twice} imply that $f(\gamma((i-2)r))$, $f(\gamma((i-1)r))$, and $f(\gamma(ir)$ lie in a common minimizing geodeisc segment of length $2r$.  Since $f(\gamma(kr))=\tilde{\gamma}(kr)$ when $k=(i-2)$ and $k=(i-1)$, this minimizing geodesic segment is the restriction of $\tilde{\gamma}$ to the interval $[(i-2)r,ir]$, verifying (\ref{star}) when $k=i$.

As $\tilde{\gamma}$ is periodic with period one, $f(y)=f(\gamma(nr))=\tilde{\gamma}(nr)=\tilde{\gamma}(nr-\lfloor nr\rfloor)$.  Therefore, the restriction of $\tilde{\gamma}$ to the interval $[0,nr-\lfloor nr\rfloor]$ is a geodesic segment of length $nr-\lfloor nr \rfloor$ joining $f(x)$ to $f(y)$.  As $nr-\lfloor nr \rfloor<\inj(X)$, $d(f(x),f(y))=nr-\lfloor nr \rfloor$ and $nr-\lfloor nr \rfloor \in P_f$.  Repeating the argument with $f^{-1}$ demonstrates $nr-\lfloor nr \rfloor \in SP_f$, concluding the proof.
\qed

\end{document}